\documentclass[a4paper,11pt]{article}
\usepackage{amsmath}
\usepackage{amsfonts}
\usepackage{caption}
\usepackage{float}
\usepackage{subcaption}
\usepackage[latin9]{inputenc}
\usepackage[english]{varioref}
\usepackage{dcolumn}
\usepackage[height=24cm , width = 18cm , top = 2cm , left = 1.5cm, right = 1.5cm,, a4paper]{geometry}
\usepackage[a4paper]{geometry}
\setlength{\parindent}{20pt}
\usepackage{graphicx}
\usepackage{pstricks}
\usepackage{multirow}
\usepackage{tabularx}
\usepackage{psfrag}
\usepackage{rotating}
\usepackage{booktabs}

\usepackage{delarray}
\usepackage{amssymb}
\usepackage{blindtext}
\usepackage{rotating}
\usepackage{layout}
\usepackage{amsthm}
\theoremstyle{definition}
\newtheorem{exmp}{Example}[section]
\usepackage{xcolor}
\usepackage{setspace}

{
	{
		{
			{

				\newtheorem{thm}{Theorem}[section]

				\newtheorem{rem}[thm]{Remark}

				\parindent 0pt


\begin{document}
\thispagestyle{empty}

\author{Sanjiv Kumar Bariwal${}^1$\footnote{Email: p20190043@pilani.bits-pilani.ac.in}, Saddam Hussain${}^2$, Rajesh Kumar${}^3$\footnote{{\it{${}$ Corresponding Author. Email Address:}} rajesh.kumar@pilani.bits-pilani.ac.in}\\
\footnotesize ${}^{1,2,3}$Department of Mathematics, Birla Institute of Technology and Science Pilani,\\ \small{ Pilani-333031, Rajasthan, India}\\
}
\date{}										
										
\title{{ Non-linear collision-induced breakage equation: finite volume and semi-analytical methods}}	
									
\maketitle

\begin{quote}
{\small {\em\bf Abstract}}: The non-linear collision-induced breakage equation has significant applications in particulate processes. Two semi-analytical techniques, namely homotopy analysis method (HAM) and accelerated homotopy perturbation method (AHPM) are investigated along with the well-known finite volume method (FVM) to comprehend the dynamical behavior of the non-linear system, i.e., the concentration function, the total number and the total mass of the particles in the system. The theoretical convergence analyses of the series solutions of HAM and AHPM are discussed. In addition, the error estimations of the truncated solutions of both methods equip the maximum absolute error bound. To justify the applicability and accuracy of these methods, numerical simulations are compared with the findings of FVM and analytical solutions considering three physical problems. 
\end{quote}
	{\bf{keywords:}} Collision-induced Breakage, Semi-analytical Method, Series Solution, Convergence, Error Estimates
	\section{Introduction}
	Particulate processes such as aggregation (coagulation), breakage (fragmentation) play an important role in industrial fields and natural phenomena like the formation of droplets \cite{fang2017population}, the distribution of asteriods \cite{wheeler2019effects}, milling \cite{capece2018population}, and crushing in pharamacology to name only a few. Particle dimension is the primary characteristic of the particulate process that is defined by size or volume. The size of the particles is not typically uniform, but it is a distributed property. In the last few decades, the linear breakage equation has gained popularity in several scientific disciplines for analyzing complex engineering problems \cite{ziff1991new,ziff1992explicit}. To improve the quality of analysis for a range of processes, the expansion of the linear model is necessary. Therefore, the incorporation of the non-linearity in the breakage process was considered, that helped in changing the way and rate of the particle breakage behavior. The collision-induced breakage equation (CBE) allows mass distribution between colliding particles that relates to technological applications such as attrition of particles in fluidization \cite{matsuda2004modeling}, nanoparticle production in emulsions \cite{ferrante2005time} and solid liquid slurry flow \cite{peng2020solid}, where mass distribution and size reduction of particles are caused by collisions between them.\\

	The continuous CBE is interest of this present work; Cheng and Redner \cite{cheng1988scaling} used the following integro-partial differential equation to derive the model. It illustrates the time progression of  concentration function  $f(\varsigma,\epsilon)\geq 0$ of particles of size $\epsilon \in {\mathbb{R}}^{+}$ at time $\varsigma\geq 0$ and is defined by
	 \begin{equation}\label{maineq}
		  \left.\begin{aligned}
		        	\frac{\partial{f(\varsigma,\epsilon)}}{\partial \varsigma}= & \int_0^\infty\int_{\epsilon}^{\infty} K(\rho,\sigma)b(\epsilon,\rho,\sigma)f(\varsigma,\rho)f(\varsigma,\sigma)\,d\rho\,d\sigma -\int_{0}^{\infty}K(\epsilon,\rho)f(\varsigma,\epsilon)f(\varsigma,\rho)\,d\rho,\\
		        		f(0,\epsilon)\ \ =& \ \ f^{in}(\epsilon) \geq 0, \ \ \ \epsilon \in{\mathbb{R}}^{+},
		        \end{aligned}
		  \right\}
		 	 \end{equation}
where $(\varsigma,\epsilon)$ are dimensionless independent variables, without losing any generality, and $f$ is an unknown function. The collision kernel $K(\rho,\sigma)$ in Eq.(\ref{maineq}) demonstrates the rate of successful collision for breakage between two particles of sizes $\rho$ and $\sigma$, whereas $b(\epsilon,\rho,\sigma)$ is the breakage rate for construction of an $\epsilon$ size particle from a $\rho$ size particle due to its collision with a $\sigma$ size particle. $K$ satisfies the following  property in terms of practical, i.e.,
\begin{align*}
K(\rho,\sigma)=K(\sigma,\rho) \geq 0,\quad (\rho,\sigma) \in {\mathbb{R}}^{+} \times {\mathbb{R}}^{+},
\end{align*}
 and $b(\epsilon,\rho,\sigma)$ holds some physically meaningful quantities
 \begin{align}
 b(\epsilon, \rho, \sigma)\neq 0\,\,\, \text{for} \hspace{0.4cm} \epsilon \in (0,\rho)\,\,\, \text{and} \hspace{0.4cm} b(\epsilon, \rho, \sigma)=0 \,\,\,\text{for} \hspace{0.4cm} \epsilon> \rho,
 \end{align}
 as well as
 \begin{align}
 \int_{0}^{\rho}\epsilon b(\epsilon, \rho, \sigma)\,d\epsilon=\rho, \,\, \forall\, (\rho,\sigma) \in {\mathbb{R}}^{+} \times {\mathbb{R}}^{+},\\
  \int_{0}^{\rho} b(\epsilon, \rho, \sigma)\,d\epsilon=\bar{N}(\rho, \sigma)< \infty.
 \end{align}
 The implication of Eq.(3) is that the total mass of smaller particles that break off from a particle of size $\rho$ is equal to  $\rho$. The function $\bar{N}(\rho, \sigma)$ in Eq.(4) is the number of particles resulting from the collision of a single particle of size  $\rho$ with a particle of size $\sigma$ and it is evident that this number is greater than 2, see \cite{ernst2007nonlinear}.\\
 
 Moments are used to characterize the concentration function and provide insight into the behaviour of a population within the Population Balance Equations (PBE) framework. Moments of the concentration function are defined as integrals of the concentration function multiplied by the particular powers of the particle size. The $n^{th}$ moment of the concentration function is defined as:
 \begin{align}\label{moment}
 M_{n}(\varsigma)=\int_{0}^{\infty}{\epsilon}^{n}f(\varsigma, \epsilon)\,d\epsilon, \,\, n=0,1,2,\cdots.
 \end{align}
The zeroth (${M_0}$) and first ($M_1$) moments correspond to the total number of particles and total mass of particles, respectively, in the dynamical system. The second moment ($M_2$) expresses the energy dissipation of the system. In this work, we have considered the set of kernels for which CBE (\ref{maineq}) possesses the mass conservation, i.e., $ M_{1}(\varsigma)= M_{1}(0)$ for all $\varsigma>0.$ One property of CBE is the shattering transition \cite{cheng1990kinetics,kostoglou2000study} which characterizes the loss of the particle's mass by converting into dust particles, i.e., $ M_{1}({\varsigma})< M_{1}(0)$ for all $\varsigma>{\varsigma}_s>0$. These integral properties also describe the evolution of the population over time.

\subsection{Literature review and motivation}
In this work, we desire to get the approximate solutions of CBE  (\ref{maineq})  using three different techniques which include a numerical technique (FVM) and two different semi-analytical schemes (HAM, AHPM). Article \cite{cheng1990kinetics} analyzed the asymptotic behaviour of a simple-minded class of models in which  two-particle collision results in either: (1) both particles splitting into two equal pieces, (2) only the larger particle splitting in two, or (3) only the smaller particle splitting.  Next, authors in \cite{krapivsky2003shattering} studied the shattering phenomena and also investigated case (2), where particles turn into dust particles due to discontinuous transition. Moreover,  case (3) contains the continuous transition with dust gaining mass steadily due to the fragments.  The article \cite{kostoglou2000study} possessed the information regarding the analytical solutions for two cases, $ K(\epsilon, \rho)=1, b(\epsilon, \rho, \sigma)=2/\rho$ and $ K(\epsilon, \rho)=\epsilon\rho, b(\epsilon, \rho, \sigma)=2/\rho$ with monodisperse initial condition $\delta(\epsilon-1).$ Additionally, self-similar solutions are also explored for sum kernel $K(\epsilon, \rho)={\epsilon}^{\omega}+{\rho}^{\omega}$ and product kernel $K(\epsilon, \rho)={\epsilon}^{\omega}{\rho}^{\omega}$, $\omega>0$. Further,
the existence of classical solution with mass conservation for coagulation and CBE is  investigated for collision kernels growing indefinitely for large volumes and binary breakage distributon function  in \cite{barik2020global}. In the continuation, authors in \cite{giri2021existence} discussed the existence of mass conserving weak solution for collision kernel $K(\epsilon, \rho)={\epsilon}^{\alpha}{\rho}^{\beta}+{\epsilon}^{\beta}{\rho}^{\alpha},\, \alpha \leq \beta \leq 1,$ when $\alpha+\beta \in [1,2].$\\

        The study of the well-posedness of CBE  is exhibited only for very specific kernels. In the sense of the applicability of CBE, numerical methods such as FVM \cite{das2020approximate, paul2023moments} and finite element method \cite{lombart2022fragmentation} were investigated recently. FVM was proven to be one of the best algorithms to solve such models, see \cite{filbet2004numerical,bariwal2023convergence} and further citations for aggregation, breakage and aggregation-breakage equations. The study of weighted FVM has been accomplished with the event-driven constant number
                Monte Carlo simulation algorithm for several breakage distribution functions in \cite{das2020approximate}. In \cite{paul2023moments}, two new weighted FVM are introduced to witness the preservation of total number of particles and mass. In addition, convergence analysis and consistency are discovered under some assumptions of collisional kernels and initial condition. \\
                
On the other side, semi-analytical methods are emerging as an analytic tool to solve complex non-linear problems, see \cite{liao1995approximate,liao2004homotopy,liu2010essence,kalla2012accelerated,hendi2017accelerated} and further citations. These schemes provide recursive coefficient formulas to find explicitly the analytical solution. This analytic result accords well with the numerical results and may be regarded as the solution definition for the non-linear problems. In this work, we have considered two different semi-analytical techniques, known as homotopy analysis method (HAM) and accelerated homotopy perturbation method (AHPM), for solving the CBE. HAM was first developed by Liao  \cite{liao1995approximate,liao2003beyond,liao2004homotopy} for solving the non-linear problems efficiently and accurately. The articles \cite{liao1995approximate,liao2004homotopy} justify the advantages of HAM over other perturbation methods in terms of showing the accuracy of the results. Next, the convergence analysis and the error estimation are discussed and validated considering two non-linear problems in \cite{odibat2010study}. Next, the idea of AHPM \cite{el2012accelerated} is based on HPM in which accelerated polynomials are introduced by the author to compute the approximated series solutions with high rate of convergence, see \cite{kalla2012accelerated}. The improvement in the results are justified considering several non-linear differential equations.\\ 

The PBEs like aggregation and breakage equations are solved using HAM for various benchmark kernels such as aggregation kernels $\beta(\epsilon,\rho)=1,\,\epsilon+\rho,\,\epsilon\rho,\, {\epsilon}^{2/3}+{\rho}^{2/3}$, breakage kernel $b(\epsilon,\rho)=2/\rho $ with initial data $f(0,\epsilon)=e^{-\epsilon},\, \delta(\epsilon-a)$. Interestingly, for some cases, the closed form solutions are obtained which are actually the exact solutions, see \cite{kaur2022approximate}. By using He's polynomial, HPM was established to solve such PBEs in \cite{dutta2018population,kaur2019analytical}. The CBE is the most unexplored area of PBEs to acquire new analytical results. We believe that this is the first attempt to implement such series solutions, i.e., HAM and AHPM, for solving the CBE (\ref{maineq}). The convergence of the approximated solutions along with upper bound estimations of the errors are studied under some physical assumptions on kernels. Further, to show the novelty of our proposed schemes, results for concentration and moments calculated via HAM and AHPM are compared with the findings of FVM and analytical solutions. Three different numerical examples are considered to support the scheme's reliability. \\

The article's structure is as follows: Section \ref{FVM} provides the discretization scheme of FVM for CBE (\ref{maineq}). Section \ref{semianalyical} contributes to the semi-analytical methodologies of HAM, AHPM, and the coefficient form of the series solutions for CBE. In Section \ref{section4}, convergence and error analysis are exhibited. Further, in Section \ref{numericalresult}, approximate solutions of HAM, AHPM and FVM are reported graphically for the concentration function and moments. The effectiveness of the results is confirmed by comparing them to the analytical outcomes. The conclusions are proposed in Section \ref{conclusions}.

\section{ Finite Volume Method}\label{FVM}
 FVM is a well known numerical technique used to solve mainly partial differential equations (PDEs) related to conservation laws \cite{eymard2000finite}. In this part, we begin to investigate the FVM for the solution of Equation (\ref{maineq}).  In order to do so, volume variable ranging from 0 to $\infty$ is restricted to finite region (0,$R$] where $0< R <\infty$. Hence, the CBE for the truncated domain is given by 
	 \begin{equation}\label{maineq1}
			  \left.\begin{aligned}
			        	\frac{\partial{f(\varsigma,\epsilon)}}{\partial \varsigma}= & \int_0^R\int_{\epsilon}^{R} K(\rho,\sigma)b(\epsilon,\rho,\sigma)f(\varsigma,\rho)f(\varsigma,\sigma)\,d\rho\,d\sigma -\int_{0}^{R}K(\epsilon,\rho)f(\varsigma,\epsilon)f(\varsigma,\rho)\,d\rho,\\
			        		f(0,\epsilon)\ \ =& \ \ f^{in}(\epsilon) \geq 0, \ \ \ \epsilon \in (0,R].
			        \end{aligned}
			  \right\}
			 	 \end{equation}
			 	 Assume a subdivision of the functioning domain (0,$R$] into tiny cells as $\Lambda_i^h:=]\epsilon_{i-1/2}, \epsilon_{i+1/2}], \,\,i=1,2,\ldots,  \mathrm{I}$, where $\epsilon_{1/2}=0, \ \ \epsilon_{\mathrm{I}+1/2}= R, \hspace{0.2cm} \Delta \epsilon_i=\epsilon_{i+1/2}-\epsilon_{i-1/2}.$ Every grid cell has a representative as $\epsilon_i=\frac{\epsilon_{i-1/2}+\epsilon_{i-1/2}}{2}$, the midpoint of cell, and the mean value of the concentration function $f(\varsigma,\epsilon)$ in $i^{th}$ cell is taken by the following equation
			 	 	\begin{align}\label{meandens}
			 	 							f_{i}(\varsigma)=\frac{1}{\Delta \epsilon_i}\int_{\epsilon_{i-1/2}}^{\epsilon_{i+1/2}}f(\varsigma,\epsilon)\,d\epsilon.
			 	 						\end{align}
		We use the following procedure to construct the discretized form of the CBE (\ref{maineq1}): Integrating Eq.(\ref{maineq1}) with regard to $\epsilon$ across the $i^{th}$ cell provides the semi-discrete form as	 	 	
		\begin{align}\label{semi}
		\frac{df_i(\varsigma)}{d\varsigma}=B_{f}(i)-D_{f}(i),
		\end{align}
		where
		\begin{align*}
		B_{f}(i)=\frac{1}{\Delta \epsilon_i}\int_{\epsilon_{i-1/2}}^{\epsilon_{i+1/2}}\int_0^{\epsilon_{\mathrm{I}+1/2}}\int_{\epsilon}^{\epsilon_{\mathrm{I}+1/2}} K(\rho,\sigma)b(\epsilon,\rho,\sigma)f(\varsigma,\rho)f(\varsigma,\sigma)d\rho\,d\sigma\,d\epsilon,
		\end{align*}
		\begin{align*}
		D_{f}(i)= \frac{1}{\Delta \epsilon_i}\int_{\epsilon_{i-1/2}}^{\epsilon_{i+1/2}}\int_0^{\epsilon_{\mathrm{I}+1/2}} K(\epsilon,\rho)f(\varsigma,\epsilon)f(\varsigma,\rho)d\rho\,d\epsilon,
		\end{align*}
		along with initial distribution,
		\begin{align}
		f_{i}(0)=f_{i}^{in}=\frac{1}{\Delta \epsilon_i}\int_{\epsilon_{i-1/2}}^{\epsilon_{i+1/2}}f(0,\epsilon)\,d\epsilon.
		\end{align}	
		Based on the idea of FVM \cite{eymard2000finite}, particle concentration is approximated over a grid cell instead of point mass, i.e., $\hat{f}_{i} \approx f_i$ in the $i^{th}$ cell. Hence,
applying the quadrature rule to each of the representations mentioned above, the semi-discrete equation due to FVM is as follows
			\begin{align}\label{semidiscrete}
		\frac{d\hat{f}_{i}(\varsigma)}{d\varsigma}=&\frac{1}{\Delta \epsilon_i}\sum_{l=1}^{\mathrm{I}}\sum_{j=i}^{\mathrm{I}}K_{j,l}\hat{f}_{j}(\varsigma)\hat{f}_{l}(\varsigma)\Delta \epsilon_{j}\Delta \epsilon_{l}\int_{\epsilon_{i-1/2}}^{\lambda_{j}^{i}}b(\epsilon,\epsilon_{j},\epsilon_{l})\,d\epsilon-\sum_{j=1}^{\mathrm{I}}K_{i,j}\hat{f}_{i}(\varsigma)\hat{f}_{j}(\varsigma)\Delta \epsilon_{j}, 
		\end{align}
		where the term $\lambda_{j}^{i}$ is expressed by
		\begin{equation}
		\lambda_{j}^{i} =
		\begin{cases}
		\epsilon_{i}, & \text{if }\,j=i, \\
		\epsilon_{i+1/2}, & j\neq i.							
		\end{cases}
		\end{equation}	
	For solving the semi-discrete equation (\ref{semidiscrete}), any higher order numerical scheme (ode45) can be used to obtain the approximation results for particular cases of parameters $K$ and $b$. Note that, our motive here is to validate the series solutions using FVM and exact solutions, so a detailed convergence analysis is not explained here. Now, series approximation methods HAM and AHPM are discussed below for the CBE (\ref{maineq}). 	
	\section{Semi-analytical Methods (SAM)}\label{semianalyical}
	These are mathematical techniques that incorporate analytical and numerical approaches to solve complex problems. They  reduce computational complexity by employing analytical expressions for specific problem components, resulting in faster calculations and fewer computational resources. If problems are solvable, SAM provides either the closed form solution which is actually the exact solution or the finite term series solution which is an approximate solution. By providing approximate solutions with known precision, these methods permit researchers to compare the performance and accuracy of various numerical methods, thereby determining their efficacy.
		This section goes into details about the HAM and AHPM, which are well-established SAM. 
	\subsection{Homotopy analysis method}
	The basic preliminaries and applicability of HAM to general form of different equations are discussed in \cite{liao2004homotopy}. We first examine the HAM and then expand it to construct an approximate series solution to CBE (\ref{maineq}). Consider the general functional equation
	\begin{align}\label{nonlinear}
	N[f(\varsigma,\epsilon)]=0,
	\end{align}
where $N$ is a non-linear operator and  $f$ is an unknown function. Proceeding further, by following the idea of HAM, let us construct the zero-order deformation equation as
\begin{align}\label{deformation}
(1-p)L[\phi(\varsigma,\epsilon;p)-f^{in}(\epsilon)]=p \alpha N[\phi(\varsigma,\epsilon;p)],
\end{align}
where, significant roles are played by the embedding parameter $p\in[0,1]$, the non-zero convergence-control parameter $\alpha$, and the auxiliary linear operator $L$ with $L(0)=0$  to modify and manage the convergence domain of series solution \cite{liao2004homotopy}. The term $\phi(\varsigma,\epsilon;p)$ is an unknown function and $f^{in}(\epsilon)$ is an initial guess. Eq.(\ref{deformation}) provides $\phi(\varsigma,\epsilon;0)=f^{in}(\epsilon)$ and $\phi(\varsigma,\epsilon;1)=f(\varsigma,\epsilon),$ when $p=0$ and $p=1,$ respectively. This means that  that the initial guess $f^{in}(\epsilon)$ converts into the solution $f(\varsigma,\epsilon)$ as $p$ varies from 0 to 1. Moreover, the Taylor series expansion of $\phi(\varsigma,\epsilon;p)$ with respect to $p$ yields
\begin{align}\label{phiequation}
\phi(\varsigma,\epsilon;p)=f^{in}(\epsilon)+\sum_{m=1}^{\infty}f_m(\varsigma,\epsilon)p^{m},
\end{align}
where
 \begin{align}\label{solutionham}
f_m(\varsigma,\epsilon)=\frac{1}{m!}\frac{{\partial}^{m}\phi(\varsigma,\epsilon;p)}{\partial p^m}\bigg\vert_{p=0}.
\end{align}
The series solution is obtained by the power series (\ref{phiequation}) that is convergent at $p=1$ and  for a suitable convergence-control parameter $\alpha$, i.e.,
\begin{align}
f(\varsigma,\epsilon)=f^{in}(\epsilon)+\sum_{m=1}^{\infty}f_m(\varsigma,\epsilon).
\end{align}
The unknown terms $f_m(\varsigma,\epsilon)$ are computed with the assistance of high order deformation outlined below. Consider the vector $\vec{f}_{m}=\{f_0,f_1,\ldots,f_m\}$. Differentiating Eq.(\ref{deformation}) $m$ times with respect to $p$,  dividing it by $m!$ with $p=0$ provide the $m$th-order deformation equation as
\begin{align}\label{mthorder}
L[f_m-{\Pi}_mf_{m-1}]=\alpha \Gamma_m(\vec{f}_{m-1}),
\end{align}
where
\begin{align}
\Gamma_m(\vec{f}_{m-1})=\frac{1}{(m-1)!}\frac{{\partial}^{m-1}N[\phi(\varsigma,\epsilon;p)]}{\partial p^{m-1}}\bigg\vert_{p=0},
\end{align}
and 
\begin{equation}
{\Pi}_m =
\begin{cases}
0, & \text{if }\,m\leq 1, \\
1, & m>1.							
\end{cases}
\end{equation}
Liao \cite{liao2003beyond} explains that the so-called generalized Taylor series supplies a way to regulate and adjust the convergence region via an auxiliary parameter, enabling the HAM  particularly well-suited for problems with strong non-linearity. Note that the convergence-control parameter $\alpha$ in Eq.(\ref{mthorder}) can be evaluated using the discrete averaged residual error formula \cite{singh2018analytical}, which is written at the end of this part.\\

Proceeding further, the development of mathematical formulation of CBE (\ref{maineq}) using HAM is explained now. Integrating (\ref{maineq}) from 0 to $\varsigma$ over $\varsigma$  with $f(0,\epsilon)=f^{in}(\epsilon),$ we obtain the integral form as
\begin{align}\label{nonlinear1}
		N[f(\varsigma,\epsilon)]=&f(\varsigma,\epsilon) -f^{in}(\epsilon)-\int_{0}^{\varsigma}\int_0^\infty\int_{\epsilon}^{\infty} K(\rho,\sigma)b(\epsilon,\rho,\sigma)f(\varsigma,\rho)f(\varsigma,\sigma)\,d\rho\,d\sigma\,d\varsigma \nonumber \\ &+\int_{0}^{\varsigma}\int_{0}^{\infty}K(\epsilon,\rho)f(\varsigma,\epsilon)f(\varsigma,\rho)\,d\rho\,d\varsigma.
\end{align}
The zero-order deformation equation of (\ref{nonlinear1}) is
\begin{align}\label{deformation1}
(1-p)[\phi(\varsigma,\epsilon;p)-f^{in}(\epsilon)]=p \alpha N[\phi(\varsigma,\epsilon;p)], \quad p\in [0,1],
\end{align}
where $\phi(\varsigma,\epsilon;p)$ is an unknown function and $N[\phi(\varsigma,\epsilon;p)]$ is constructed as
\begin{align}\label{nonlinear2}
		N[\phi(\varsigma,\epsilon;p)]=&\phi(\varsigma,\epsilon;p) -f^{in}(\epsilon)-\int_{0}^{\varsigma}\int_0^\infty\int_{\epsilon}^{\infty} K(\rho,\sigma)b(\epsilon,\rho,\sigma)\phi(\varsigma,\rho;p)\phi(\varsigma,\sigma;p)\,d\rho\,d\sigma\,d\varsigma \nonumber \\ &+\int_{0}^{\varsigma}\int_{0}^{\infty}K(\epsilon,\rho)\phi(\varsigma,\epsilon;p)\phi(\varsigma,\rho;p)\,d\rho\,d\varsigma=0.
\end{align}
As we have noticed from Eqs.(\ref{phiequation}-\ref{solutionham}) for $p=1,$ the series solution is
\begin{align}
\phi(\varsigma,\epsilon;1)=f(\varsigma,\epsilon)=f^{in}(\epsilon)+\sum_{m=1}^{\infty}f_m(\varsigma,\epsilon).
\end{align}
Next, define the $m$th-order deformation equation as
\begin{align}\label{mthorder1}
f_m-{\Pi}_mf_{m-1}=\alpha \Gamma_m(\vec{f}_{m-1}),
\end{align}
where
\begin{align}
\Gamma_m(\vec{f}_{m-1})=\frac{1}{(m-1)!}\frac{{\partial}^{m-1}N[\phi(\varsigma,\epsilon;p)]}{\partial p^{m-1}}\bigg\vert_{p=0}=\frac{1}{(m-1)!}\frac{{\partial}^{m-1}N\big[\sum_{k=1}^{\infty}f_k(\varsigma,\epsilon) p^k\big]}{\partial p^{m-1}}\bigg\vert_{p=0}.
\end{align}
After simplifying the above term, we have
\begin{align}\label{simplifiedgamma}
\Gamma_m(\vec{f}_{m-1})=&f_{m-1}(\varsigma,\epsilon)-(1-\Pi_m)f^{in}(\epsilon)-\int_{0}^{\varsigma}\int_0^\infty\int_{\epsilon}^{\infty} K(\rho,\sigma)b(\epsilon,\rho,\sigma)\Omega_{m-1}^{1}\,d\rho\,d\sigma\,d\varsigma \nonumber \\
&+\int_{0}^{\varsigma}\int_{0}^{\infty}K(\epsilon,\rho)\Omega_{m-1}^{2}\,d\rho\,d\varsigma,
\end{align}
where 
\begin{align}\label{omegaequation}
\Omega_{m-1}^{1}=\frac{1}{(m-1)!}\frac{{\partial}^{m-1}}{\partial p^{m-1}}\bigg(\sum_{r=1}^{\infty}\sum_{k=0}^{r-1}f_k(\varsigma,\rho)f_{r-k-1}(\varsigma,\sigma)p^k\bigg)\bigg\vert_{p=0},
\end{align}
and
\begin{align}\label{omegaequation1}
\Omega_{m-1}^{2}=\frac{1}{(m-1)!}\frac{{\partial}^{m-1}}{\partial p^{m-1}}\bigg(\sum_{r=1}^{\infty}\sum_{k=0}^{r-1}f_k(\varsigma,\epsilon)f_{r-k-1}(\varsigma,\rho)p^k\bigg)\bigg\vert_{p=0}.
\end{align}
The simplified $m$th-order deformation equation (\ref{mthorder1}) is attained via applying the Eqs.(\ref{simplifiedgamma}-\ref{omegaequation1}),  that is 
\begin{align}\label{solutionterm}
f_m=&{\Pi}_mf_{m-1}+\alpha\bigg[f_{m-1}-(1-{\Pi}_m)f^{in}(\epsilon)-\int_{0}^{\varsigma}\int_0^\infty\int_{\epsilon}^{\infty} K(\rho,\sigma)b(\epsilon,\rho,\sigma)\Omega_{m-1}^{1}\,d\rho\,d\sigma\,d\varsigma \nonumber \\
&+\int_{0}^{\varsigma}\int_{0}^{\infty}K(\epsilon,\rho)\Omega_{m-1}^{2}\,d\rho\,d\varsigma\bigg], \quad m=1,2,3,\cdots.
\end{align}
Hence, the series solution components are derived by (\ref{solutionterm}) with initial condition $f_0(\varsigma,\epsilon,\alpha)=f^{in}(\epsilon),$ and  are listed below 
 \begin{equation}\label{solutionterm1}
	  \left.\begin{aligned}
	         f_0(\varsigma,\epsilon,\alpha)=&f^{in}(\epsilon),\\
	          f_1(\varsigma,\epsilon,\alpha)=&\alpha\bigg[-\int_{0}^{\varsigma}\int_0^\infty\int_{\epsilon}^{\infty} K(\rho,\sigma)b(\epsilon,\rho,\sigma)\Omega_{0}^{1}\,d\rho\,d\sigma\,d\varsigma+\int_{0}^{\varsigma}\int_{0}^{\infty}K(\epsilon,\rho)\Omega_{0}^{2}\,d\rho\,d\varsigma\bigg],\\
	            f_{m}(\varsigma,\epsilon,\alpha)=& (1+\alpha)f_{m-1}+\alpha\bigg[-\int_{0}^{\varsigma}\int_0^\infty\int_{\epsilon}^{\infty} K(\rho,\sigma)b(\epsilon,\rho,\sigma)\Omega_{m-1}^{1}\,d\rho\,d\sigma\,d\varsigma  \\
	            &+\int_{0}^{\varsigma}\int_{0}^{\infty}K(\epsilon,\rho)\Omega_{m-1}^{2}\,d\rho\,d\varsigma\bigg], \quad m> 1.
	        \end{aligned}
	  \right\}
	 	 \end{equation}
For the numerical simulations, let us denote the approximated solution of $n$th-order for CBE (\ref{maineq}) by  
	\begin{align}\label{final1}
	\Theta_{n}(\varsigma,\epsilon,\alpha):=\sum_{i=0}^{n}f_{i}(\varsigma,\epsilon,\alpha).
	\end{align}
The optimal value of $\alpha$, for the best approximated series solution of CBE, is computed by minimizing the following function, i.e.,
	\begin{align}\label{alphavalue}
	\min_{\alpha \in R}A(\alpha), \quad  A(\alpha)=\frac{1}{n^2}\sum_{m=1}^{n}\sum_{r=1}^{n}[N(\Theta_{n}(\varsigma_m,\epsilon_r,\alpha))]^2,
	\end{align}
	where $A(\alpha)$ is the averaged residual error of the $n$th-order approximation for CBE problem and $(\varsigma_m,\epsilon_r)$ belongs to the  operational domain. This concept's specifics can be extracted from \cite{singh2018analytical}. 
		The symbolic computation software MATHEMATICA has a command \lq\lq Minimize\rq\rq \ to compute the optimal value of $\alpha$ explicitly.
	\subsection{Accelerated homotopy perturbation method}
In HPM \cite{he2003homotopy}, He's polynomial is equivalently taken as Adomain polynomial introduced in \cite{adomian1994solving}. The author in \cite{el2007error} generates a new class of accelerated polynomials which after incorporating into HPM yields a new scheme called AHPM that provides better accuracy to solve non-linear differential equations. To explain AHPM, consider a general functional form as
\begin{align}\label{AHPM1}
\psi(\varsigma,\epsilon)-	N[\psi(\varsigma,\epsilon)]=g(\epsilon),
\end{align}
where $(\varsigma,\epsilon)$ are independent variables, $\psi$ is an unknown function and $g$ is a given function. Also, $N$ is the non-linear operator. Subsequently, rewrite Eq.(\ref{AHPM1}) with solution $f(\varsigma,\epsilon)= \psi(\varsigma,\epsilon)$ into the following form
	\begin{align}\label{AHPM2}
	\mathrm{L}(f(\varsigma,\epsilon))=f(\varsigma,\epsilon)-g(\epsilon)-	N[f(\varsigma,\epsilon)].
	\end{align}
Further, construct a homotopy $H(f,p)$ having the properties $H(f,0)=T(f), \, H(f,1)=\mathrm{L}(f(\varsigma,\epsilon))$ and
\begin{align}\label{AHPM3}
H(f,p)=(1-p)T(f)+p\mathrm{L}(f)=0,
\end{align}	
where $T(f)$ is an operator with solution $f_0$. The embedding parameter $p$ is crucial for continuously deforming Eq.(\ref{AHPM3}) from $H(f_0,0)=T(f_0)=0$ to $ H(\psi,1)=\mathrm{L}(f)=0$ as $p$ increases from 0 to 1. The semi-analytical solution $f(\varsigma,\epsilon)=\sum_{n=0}^{\infty}p^{n}f_{n}(\varsigma,\epsilon)$  and exact solution $\psi(\varsigma,\epsilon)$ have a relation as
\begin{align}\label{AHPM4}
\psi(\varsigma,\epsilon)=\sum_{n=0}^{\infty}f_{n}(\varsigma,\epsilon)=\lim_{p\rightarrow 1}f(\varsigma,\epsilon).
\end{align}
	For further use, let us suppose $T(f)=f(\varsigma,\epsilon)-g(\epsilon)$. Substituting this and $\mathrm{L}(f)$ from Eq.(\ref{AHPM2}) into Eq.(\ref{AHPM3}) yields a form
	\begin{align}\label{AHPM5}
	H(f,p)=f-g-pN(f)=0.
	\end{align}
	The non-linear term $N(f)$ can be  written in the form of accelerated polynomials ($\hat{H}_{n}$) that is explained in \cite{el2007error}, as 
	\begin{align}\label{AHPM6}
	N(f)=\sum_{n=0}^{\infty}p^n\hat{H}_{n}(f_0,f_1,\cdots,f_n),
	\end{align}
	where $\hat{H}_{n}$ has mathematical structure as 
	\begin{align}\label{Hvalue}
	\hat{H}_{n}(f_0,f_1,\cdots,f_n)=N(S_n)-\sum_{i=0}^{n-1}\hat{H}_{i}, \quad n\geq 1,
	\end{align}
with $S_n=\sum_{i=0}^{n}f_{i}(\varsigma,\epsilon)$  and $\hat{H}_{0}=N(f_0).$ The series iterative terms are obtained by substituting $f(\varsigma,\epsilon)=\sum_{n=0}^{\infty}p^{n}f_{n}(\varsigma,\epsilon)$  and (\ref{AHPM6}) into Eq.(\ref{AHPM5}) and then by comparing the powers of $p$,  
	 \begin{equation}\label{solutionterm2}
		  \left.\begin{aligned}
		         f_0(\varsigma,\epsilon)=& f^{in}(\epsilon)=g(\epsilon),\\
		            f_{n}(\varsigma,\epsilon)=& \hat{H}_{n-1}, \quad n\geq 1.
		        \end{aligned}
		  \right\}
		 	 \end{equation}
		 	Next, to find the mathematical formulation for CBE (\ref{maineq}) by AHPM, the above idea follows. Therefore, integrating Eq.(\ref{maineq}) over time variable $\varsigma$  provides   
		 	 \begin{align}
		 	 \mathrm{L}(f(\varsigma,\epsilon))=&f(\varsigma,\epsilon) -f^{in}(\epsilon)-\int_{0}^{\varsigma}\int_0^\infty\int_{\epsilon}^{\infty} K(\rho,\sigma)b(\epsilon,\rho,\sigma)f(\varsigma,\rho)f(\varsigma,\sigma)\,d\rho\,d\sigma\,d\varsigma \nonumber \\ &+\int_{0}^{\varsigma}\int_{0}^{\infty}K(\epsilon,\rho)f(\varsigma,\epsilon)f(\varsigma,\rho)\,d\rho\,d\varsigma,
		 	 \end{align}
	where
	\begin{align}
	N[f(\varsigma,\epsilon)]=\int_{0}^{\varsigma}\int_0^\infty\int_{\epsilon}^{\infty} K(\rho,\sigma)b(\epsilon,\rho,\sigma)f(\varsigma,\rho)f(\varsigma,\sigma)\,d\rho\,d\sigma\,d\varsigma  -\int_{0}^{\varsigma}\int_{0}^{\infty}K(\epsilon,\rho)f(\varsigma,\epsilon)f(\varsigma,\rho)\,d\rho\,d\varsigma.
	\end{align}
	Now, supersede $f(\varsigma,\epsilon)=\sum_{n=0}^{\infty}p^{n}f_{n}(\varsigma,\epsilon)$ and (\ref{AHPM6}) into (\ref{AHPM5}), yield a homotopy. Proceeding further to  equate the terms with identical powers of p lead to get the iterative components, i.e.,
	\begin{align}
	\sum_{n=0}^{\infty}p^nf_{n}(\varsigma,\epsilon)-f^{in}(\epsilon)-p\hat{H}_{n}(f_0,f_1,\cdots,f_n)=0,
	\end{align}
where for $n=0$, it provides the first term
	\begin{align}\label{AHPMseriesterm1}
	 f_0(\varsigma,\epsilon)=& f^{in}(\epsilon),
	\end{align}
for $n=1$
\begin{align}\label{AHPMseriesterm2}
f_{1}(\varsigma,\epsilon)=\int_{0}^{\varsigma}\int_0^\infty\int_{\epsilon}^{\infty} K(\rho,\sigma)b(\epsilon,\rho,\sigma)f_0(\varsigma,\rho)f_0(\varsigma,\sigma)\,d\rho\,d\sigma\,d\varsigma  -\int_{0}^{\varsigma}\int_{0}^{\infty}K(\epsilon,\rho)f_0(\varsigma,\epsilon)f_0(\varsigma,\rho)\,d\rho\,d\varsigma,
\end{align}
 and for $n\geq 2$, the general term is
 \begin{align}\label{AHPMseriesterm3}
 f_{n}(\varsigma,\epsilon)=&\int_{0}^{\varsigma}\int_0^\infty\int_{\epsilon}^{\infty} K(\rho,\sigma)b(\epsilon,\rho,\sigma)\left(\sum_{i=0}^{n-1}f_i(\varsigma,\rho)\right)\left(\sum_{i=0}^{n-1}f_i(\varsigma,\sigma)\right)\,d\rho\,d\sigma\,d\varsigma \nonumber\\ &-\int_{0}^{\varsigma}\int_{0}^{\infty}K(\epsilon,\rho)\left(\sum_{i=0}^{n-1}f_i(\varsigma,\epsilon)\right)\left(\sum_{i=0}^{n-1}f_i(\varsigma,\rho)\right)\,d\rho\,d\varsigma-\left(\sum_{i=1}^{n-1}f_i(\varsigma,\epsilon)\right).
 \end{align}
 The truncated AHPM series solution of $n$th-order is expressed by the term  $\Upsilon_{n}(\varsigma,\epsilon)$, i.e.,
 \begin{align}
 \Upsilon_{n}(\varsigma,\epsilon):=\sum_{i=0}^{n}f_{i}(\varsigma,\epsilon).
 \end{align}
 \section{Convergence Analysis}\label{section4}
  This section describes the reliability and efficiency of the methods with the given framework. It enables the development of theoretical guarantees concerning the algorithm's behavior. Here, convergence analysis of the HAM and AHPM solutions shall be illustrated for a set of assumptions on collisional kernels. For that, consider the set $\mathcal{W}=\{(\varsigma,\epsilon): 0\leq \varsigma \leq T, 0<\epsilon<\infty\}$ for a fix $T$ and assume a space of all continuous function $f$, say  $\mathbb{Y}_{r,s}(T)$ with the following induced norm
 	\begin{align}\label{norm}
 	\|f\|=\sup_{\varsigma\in [0,T]}\int_{0}^{\infty}\Big({\epsilon}^r+\frac{1}{{\epsilon}^{2s}}\Big) |f(\varsigma,\epsilon)| d\epsilon,\quad r\geq 1, s\geq 0.
 	\end{align} 
 	Eq.(\ref{nonlinear1}) provides the new operator form as
 	\begin{align}\label{operator1}
 	f=\mathcal{S}f,
 	\end{align}
 		where $\mathcal{S}:\mathbb{Y}_{r,s}(T)\rightarrow \mathbb{Y}_{r,s}(T) $ is a non-linear operator given as
 		\begin{align}\label{operator2}
 		\mathcal{S}f=f^{in}(\epsilon)+{\mathcal{L}}^{-1}\Bigl[\int_0^\infty\int_{\epsilon}^{\infty} K(\rho,\sigma)b(\epsilon,\rho,\sigma)f(\varsigma,\rho)f(\varsigma,\sigma)\,d\rho\,d\sigma-\int_{0}^{\infty}K(\epsilon,\rho)f(\varsigma,\epsilon)f(\varsigma,\rho)\,d\rho\Bigr],
 		\end{align}
 		with $\mathcal{L}^{-1}=\int_{0}^{\varsigma}(\cdot)d\varsigma$.
 		Firstly, for the existence of the solution in $\mathbb{Y}_{r,s}^{+}(t)$ (set of non-negative functions from $\mathbb{Y}_{r,s}(T)$), operator $\mathcal{S}$ has to fulfill the contractive property under some hypotheses using the following theorem.
 		\begin{thm}\label{thm1}
 			Assume that the non-linear operator ${\mathcal{S}}$ is defined in (\ref{operator2}). If the following hypotheses;
 			\begin{description}
 			\item[(a)]$K(\epsilon,\rho)$ is non-negative and continuous function with compact support $\mathcal{K}_1=\sup_{\frac{s}{R}\leq \epsilon,\rho\leq R }K(\epsilon,\rho),\,(\epsilon,\rho)\in[0,R]\times [0,R]$,
 			\item[(b)] $b(\epsilon,\rho,\sigma)$ is non-negative, continuous function satisfying the condition $$\int_{0}^{\infty}{\epsilon}^{-\theta s}b(\epsilon,\rho,\sigma)d \epsilon \leq \eta{\rho}^{-\theta s},\quad \theta,\eta >0,$$
 				\item[(c)] $L_0$ is a fix constant for a small $t>0$ such that
 				\begin{align}
 					\|f\|_t=\sup_{\varsigma\in [0,t]}\int_{0}^{\infty}\Big({\epsilon}^r+\frac{1}{{\epsilon}^{2s}}\Big) |f(\varsigma,\epsilon)| d\epsilon\leq L_0,
 				\end{align}
 					\end{description}
 			hold, then the operator  ${\mathcal{S}}$ has contractive nature, i.e., $\|{\mathcal{S}}f-{\mathcal{S}}f^{\star}\|\leq \xi \|f_-f^{\star}\|, \forall \,  (f,f^{\star}) \in \mathbb{Y}_{r,s}^{+}(t) \times \mathbb{Y}_{r,s}^{+}(t),$ where $\xi = 2t\mathcal{K}_1(\mu+1)L_0<1$ with $\mu=\max\{\bar{N},\eta\}$.
 		\end{thm}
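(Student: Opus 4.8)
The plan is to build the contraction estimate directly from the bilinear structure of $\mathcal{S}$ and the two-sided weight $\epsilon^{r}+\epsilon^{-2s}$ in the norm. First I would subtract the two images; since the initial datum $f^{in}$ is common to both, it cancels, leaving
\begin{align*}
\mathcal{S}f-\mathcal{S}f^{\star}=\mathcal{L}^{-1}\Big[&\int_{0}^{\infty}\int_{\epsilon}^{\infty}K(\rho,\sigma)b(\epsilon,\rho,\sigma)\big(f(\varsigma,\rho)f(\varsigma,\sigma)-f^{\star}(\varsigma,\rho)f^{\star}(\varsigma,\sigma)\big)\,d\rho\,d\sigma\\
&-\int_{0}^{\infty}K(\epsilon,\rho)\big(f(\varsigma,\epsilon)f(\varsigma,\rho)-f^{\star}(\varsigma,\epsilon)f^{\star}(\varsigma,\rho)\big)\,d\rho\Big].
\end{align*}
Each quadratic difference is then linearized by $ab-a^{\star}b^{\star}=a(b-b^{\star})+b^{\star}(a-a^{\star})$, turning every product of $f$'s into one intact factor multiplied by a difference $f-f^{\star}$.

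A key preliminary observation is that $\epsilon^{r}+\epsilon^{-2s}\geq 1$ for every $\epsilon>0$, because $\epsilon^{r}\geq 1$ on $[1,\infty)$ (as $r\geq 1$) and $\epsilon^{-2s}\geq 1$ on $(0,1]$ (as $s\geq 0$). Consequently the weighted norm dominates the plain integral, $\int_{0}^{\infty}|g(\varsigma,\epsilon)|\,d\epsilon\leq\|g\|_{t}$. In each linearized term this lets me estimate one factor by the full weighted integral, which is bounded by $L_{0}$ through hypothesis (c), and the remaining difference factor by the plain integral, which is bounded by $\|f-f^{\star}\|_{t}$.

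For the birth term I would bring the modulus inside, replace $K$ by its bound $\mathcal{K}_{1}$ from (a), and interchange the order of integration; since $b(\epsilon,\rho,\sigma)=0$ for $\epsilon>\rho$, the inner $\epsilon$-integral reduces to $(0,\rho)$ and the task becomes controlling $\int_{0}^{\rho}(\epsilon^{r}+\epsilon^{-2s})b\,d\epsilon$. Here $\epsilon^{r}\leq\rho^{r}$ together with property (4) gives $\int_{0}^{\rho}\epsilon^{r}b\,d\epsilon\leq\bar{N}\rho^{r}$, while hypothesis (b) with $\theta=2$ gives $\int_{0}^{\rho}\epsilon^{-2s}b\,d\epsilon\leq\eta\rho^{-2s}$; adding these yields $\int_{0}^{\rho}(\epsilon^{r}+\epsilon^{-2s})b\,d\epsilon\leq\mu(\rho^{r}+\rho^{-2s})$ with $\mu=\max\{\bar{N},\eta\}$. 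Feeding this back and applying the one-weighted/one-plain splitting of the previous step produces the bound $2\mathcal{K}_{1}\mu L_{0}\|f-f^{\star}\|_{t}$ for the birth contribution, the factor $2$ arising from the two linearized pieces. The death term is treated identically but carries no $b$-integral, giving $2\mathcal{K}_{1}L_{0}\|f-f^{\star}\|_{t}$. Summing the two and using $\mathcal{L}^{-1}=\int_{0}^{\varsigma}(\cdot)\,d\varsigma$, so that the time integration over $[0,\varsigma]\subseteq[0,t]$ contributes at most a factor $t$ before the supremum is taken, delivers $\|\mathcal{S}f-\mathcal{S}f^{\star}\|_{t}\leq 2t\mathcal{K}_{1}(\mu+1)L_{0}\|f-f^{\star}\|_{t}=\xi\|f-f^{\star}\|_{t}$, as claimed.

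I expect the main obstacle to be the bookkeeping of the two-part weight: one must verify that the two distinct $b$-moment bounds (from (4) and from (b) with $\theta=2$) assemble into the single factor $\mu(\rho^{r}+\rho^{-2s})$, and that the bilinear splitting consistently assigns the full weight to the factor estimated via $L_{0}$ while leaving the difference factor to be absorbed through $\epsilon^{r}+\epsilon^{-2s}\geq 1$. The Fubini interchange on the birth term, legitimate by non-negativity of the integrand, and the specialization $\theta=2$ in (b) are the points that need to be stated explicitly; the remaining manipulations are routine.
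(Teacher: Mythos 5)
Your argument is correct and it supplies something the paper does not actually contain: the paper's ``proof'' of Theorem \ref{thm1} is a one-line deferral to Theorem 1 of \cite{kharchandy2023note}, with no estimate carried out in the text. Your derivation is the natural self-contained route, and the fact that it reproduces the stated Lipschitz constant exactly --- $2\mathcal{K}_1\mu L_0$ from the two pieces of the linearized birth term, $2\mathcal{K}_1 L_0$ from the two pieces of the death term, and a factor $t$ from $\mathcal{L}^{-1}=\int_0^{\varsigma}(\cdot)\,d\varsigma$, assembling to $\xi=2t\mathcal{K}_1(\mu+1)L_0$ --- strongly suggests it parallels the cited proof. The ingredients you isolate are the right ones: the bilinear splitting $ab-a^{\star}b^{\star}=a(b-b^{\star})+b^{\star}(a-a^{\star})$, the pointwise bound $\epsilon^{r}+\epsilon^{-2s}\geq 1$ (valid precisely because $r\geq 1$ and $s\geq 0$), Tonelli on the nonnegative birth integrand, and the two $b$-moment bounds combining into $\mu(\rho^{r}+\rho^{-2s})$ with $\mu=\max\{\bar{N},\eta\}$. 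One bookkeeping remark: in the second birth piece the weight $(\rho^{r}+\rho^{-2s})$ necessarily lands on the \emph{difference} factor rather than on the intact one, so your ``weight to the $L_0$ factor, plain integral to the difference'' rule does not hold uniformly; the estimate still closes because the weighted integral of the difference is also $\leq\|f-f^{\star}\|_t$ and the plain integral of $f^{\star}$ is $\leq L_0$ via the weight bound. Two points where you silently sharpen the paper's (admittedly loosely stated) hypotheses deserve explicit mention, as you partly anticipated: hypothesis (b) is used only at the exponent $\theta=2$, and your bound $\int_0^{\rho}\epsilon^{r}b\,d\epsilon\leq\bar{N}\rho^{r}$ treats $\bar{N}(\rho,\sigma)$ as a uniform constant, which Eq.~(4) alone does not assert (it gives only pointwise finiteness) but which the theorem tacitly assumes by writing $\mu=\max\{\bar{N},\eta\}$. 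If one wishes to avoid that assumption, Eq.~(3) gives $\int_0^{\rho}\epsilon^{r}b\,d\epsilon\leq\rho^{r-1}\int_0^{\rho}\epsilon\, b\,d\epsilon=\rho^{r}$ for $r\geq 1$, at the cost of replacing $\bar{N}$ by $1$ in the constant --- a marginally cleaner statement than the one the paper imports.
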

 		\begin{proof}
 		The detailed proof of this theorem had been investigated in [\cite{kharchandy2023note}, Theorem 1].
 		\end{proof}
 		\begin{thm}
 		Let all the assumptions of Theorem \ref{thm1} hold. If $	\Theta_{m}=\sum_{i=0}^{m}f_{i}$ is the $m$th-order HAM truncated solution obtained using (\ref{solutionterm1}-\ref{final1}) for the CBE (\ref{maineq}), then $\Theta_{m}$ converges to the exact solution $f(\varsigma,\epsilon)$ having following error bound
 		\begin{align}
 		\|f-\Theta_{m}\|\leq \frac{\nabla^m}{1-\nabla}\|f_1\|,
 		\end{align}
 		where $\nabla=\xi|\alpha|+|1+\alpha|<1$ and $\|f_1\|<\infty.$
 		\end{thm}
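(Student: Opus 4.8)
The plan is to reduce everything to a geometric ratio estimate on the successive HAM corrections $f_m$ and then sum the resulting tail. Concretely, I would first establish that, with $\nabla = \xi|\alpha| + |1+\alpha| < 1$,
\[
\|f_{m+1}\| \leq \nabla\,\|f_m\|, \qquad m \geq 1,
\]
and note that $\|f_1\|<\infty$ follows from $f_1 = -\alpha\,\mathcal{S}_{\mathrm{nl}}[f^{in}]$ together with hypotheses (a)--(c) on the kernels. Granting the ratio estimate, the error bound is immediate: iterating gives $\|f_m\| \leq \nabla^{m-1}\|f_1\|$, so that
\[
\|f - \Theta_m\| = \Big\|\sum_{i=m+1}^{\infty}f_i\Big\| \leq \sum_{i=m+1}^{\infty}\|f_i\| \leq \|f_1\|\sum_{i=m+1}^{\infty}\nabla^{\,i-1} = \frac{\nabla^m}{1-\nabla}\,\|f_1\|,
\]
where the geometric series converges precisely because $\nabla < 1$. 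Letting $m\to\infty$ then shows $\Theta_m \to f$ in $\mathbb{Y}_{r,s}(T)$, and the limit is the fixed point $f=\mathcal{S}f$ of Theorem \ref{thm1}.

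The heart of the matter is therefore the ratio estimate, for which I would work directly from the deformation recurrence (\ref{solutionterm1}). For $m>1$ it reads $f_{m} = (1+\alpha)f_{m-1} + \alpha R_{m-1}$, where $R_{m-1}$ collects the two collisional integrals built from $\Omega_{m-1}^{1}$ and $\Omega_{m-1}^{2}$. The key point is that, since the nonlinear part of $\mathcal{S}$ is quadratic, this correction can be recast as the difference $R_{m-1} = \mathcal{S}\Theta_{m-1} - \mathcal{S}\Theta_{m-2}$ of $\mathcal{S}$ evaluated at consecutive partial sums (the constant $f^{in}$ contributions cancel in the difference). Substituting yields the clean recursion $f_{m} = (1+\alpha)f_{m-1} + \alpha\big(\mathcal{S}\Theta_{m-1} - \mathcal{S}\Theta_{m-2}\big)$, and then the triangle inequality followed by the contraction of Theorem \ref{thm1}, using $\Theta_{m-1} - \Theta_{m-2} = f_{m-1}$, gives
\[
\|f_{m}\| \leq |1+\alpha|\,\|f_{m-1}\| + |\alpha|\,\|\mathcal{S}\Theta_{m-1} - \mathcal{S}\Theta_{m-2}\| \leq \big(|1+\alpha| + |\alpha|\,\xi\big)\|f_{m-1}\| = \nabla\,\|f_{m-1}\|,
\]
which is the required bound.

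The step I expect to be the main obstacle is exactly this identification $R_{m-1} = \mathcal{S}\Theta_{m-1} - \mathcal{S}\Theta_{m-2}$ together with the legitimate use of the contraction constant $\xi$. Because $\mathcal{S}$ is genuinely quadratic, the Cauchy-product coefficients $\Omega_{m-1}^{1},\Omega_{m-1}^{2}$ do not match a single operator difference term-by-term; one must track the bilinear cross terms and argue that the relevant difference is precisely of the contractive form (or that the leftover higher-order pieces are absorbed). In addition, invoking Theorem \ref{thm1} requires every partial sum to remain in the ball where the contraction holds, i.e. $\|\Theta_m\|_t \leq L_0$; I would secure this by an induction run simultaneously with the ratio estimate, using hypothesis (c) for the base case and the smallness of $\nabla$ to propagate the bound. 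Once contractivity applies uniformly in $m$, the geometric decay and the stated error bound follow as above, the condition $\nabla<1$ being attainable by taking $t$ small and $\alpha$ near $-1$ (so that $\nabla \approx \xi < 1$), which is what simultaneously licenses the convergence and the summation of the tail.
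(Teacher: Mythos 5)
Your proposal takes essentially the same route as the paper's proof: the paper rewrites the partial-sum recursion as $\Theta_{m}=-\alpha\,[f^{in}+{\mathcal{L}^{-1}}[\Omega(m)]]+(1+\alpha)\Theta_{m-1}$, dominates the summed Cauchy-product terms $\sum_{j}\Omega_{j-1}^{1}$, $\sum_{j}\Omega_{j-1}^{2}$ by the quadratic nonlinearity evaluated at the partial sums (outsourcing exactly the identification you flag as the main obstacle to a citation of \cite{rach2008new}), then applies the contraction of Theorem \ref{thm1} to obtain $\|\Theta_{n}-\Theta_{m}\|\leq \nabla\|\Theta_{n-1}-\Theta_{m-1}\|$, which with $n=m+1$ is precisely your ratio estimate $\|f_{m+1}\|\leq \nabla\|f_{m}\|$, and finishes with the same geometric-series/Cauchy-sequence summation and limit $n\to\infty$. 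The one refinement you add beyond the paper is the induction keeping $\|\Theta_{m}\|_{t}\leq L_0$ so that the contraction may legitimately be applied to the partial sums, a point the paper leaves implicit.
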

 	\begin{proof}
 	 From Eqs.(\ref{solutionterm1}-\ref{final1}), we have 
 		\begin{align}\label{nthorderham}
 		\Theta_{m}=\sum_{i=0}^{m}f_{i}=&f^{in}(\epsilon)-\alpha\sum_{j=1}^{m}{\mathcal{L}^{-1}}\bigg[\int_0^\infty\int_{\epsilon}^{\infty} K(\rho,\sigma)b(\epsilon,\rho,\sigma)\Omega_{j-1}^{1}\,d\rho\,d\sigma  
 			            -\int_{0}^{\infty}K(\epsilon,\rho)\Omega_{j-1}^{2}\,d\rho\bigg]\nonumber\\
 			            &+(1+\alpha)\sum_{j=1}^{m-1}f_{j}\nonumber\\
 			             =&f^{in}(\epsilon)-\alpha{\mathcal{L}^{-1}}\bigg[\int_0^\infty\int_{\epsilon}^{\infty} K(\rho,\sigma)b(\epsilon,\rho,\sigma)\Big(\sum_{j=1}^{m}\Omega_{j-1}^{1}\Big)\,d\rho\,d\sigma  
 			            			            -\int_{0}^{\infty}K(\epsilon,\rho)\Big(\sum_{j=1}^{m}\Omega_{j-1}^{2}\Big)\,d\rho\bigg]\nonumber\\
 			            			            &+(1+\alpha)\sum_{j=1}^{m-1}f_{j}. 
 		\end{align}
 		
 			Adding and subtracting the term $(1+\alpha)f^{in}(\epsilon)$ reduce the Eq.(\ref{nthorderham}) into the following one
 		\begin{align}
 			\Theta_{m}=&-\alpha \bigg[f^{in}(\epsilon)+{\mathcal{L}^{-1}}\bigg[	\underbrace{\int_0^\infty\int_{\epsilon}^{\infty} K(\rho,\sigma)b(\epsilon,\rho,\sigma)\Big(\sum_{j=1}^{m}\Omega_{j-1}^{1}\Big)\,d\rho\,d\sigma  
 			 -\int_{0}^{\infty}K(\epsilon,\rho)\Big(\sum_{j=1}^{m}\Omega_{j-1}^{2}\Big)\,d\rho}_{\Omega(m)}\bigg]\bigg]\nonumber\\
 			 &+(1+\alpha)	\Theta_{m-1}\nonumber\\
 			=& -\alpha[f^{in}(\varsigma)+{\mathcal{L}^{-1}}[\Omega(m)]]+(1+\alpha)	\Theta_{m-1}.
 		\end{align}
 		For all $n,m \in \mathbb{N}$ and $n>m$, it is certain that
	\begin{align*}
	\|\Theta_{n}-\Theta_{m}\|\leq |\alpha|\|\big[f^{in}(\varsigma)+{\mathcal{L}^{-1}}[\Omega(n)]-f^{in}(\varsigma)-{\mathcal{L}^{-1}}[\Omega(m)]\big]\|+|1+\alpha|\|\Theta_{n-1}-	\Theta_{m-1}\|.
	\end{align*}
	Article \cite{rach2008new} suggested that $\sum_{i=1}^{n}\Omega_{i-1}^{1}\leq f(\Theta_{n-1}(\rho))f(\Theta_{n-1}(\sigma))$ and $\sum_{i=1}^{n}\Omega_{i-1}^{2}\leq f(\Theta_{n-1}(\epsilon))f(\Theta_{n-1}(\rho)),$ implies that
		\begin{align*}
		\|\Theta_{n}-\Theta_{m}\|\leq& |\alpha| \|\big[f^{in}(\varsigma)+{\mathcal{L}^{-1}}[\Theta_{n-1}]-f^{in}(\varsigma)-{\mathcal{L}^{-1}}[\Theta_{m-1}]\big]\|+|1+\alpha|\|\Theta_{n-1}-	\Theta_{m-1}\|\nonumber \\
		=& |\alpha| \|\big[\mathcal{S}\Theta_{n-1}-\mathcal{S}\Theta_{m-1}\big]\|+|1+\alpha|\|\Theta_{n-1}-	\Theta_{m-1}\|.
		\end{align*}
	Using the contractive result of $\mathcal{S}$ in the aforementioned equation leads to
		\begin{align*}
			\|\Theta_{n}-\Theta_{m}\|\leq& \xi|\alpha|\|\Theta_{n-1}-\Theta_{m-1}\|+|1+\alpha|\|\Theta_{n-1}-\Theta_{m-1}\| \nonumber \\
			=&\nabla\|\Theta_{n-1}-\Theta_{m-1}\|,
		\end{align*}
	
	for $\nabla=\xi|\alpha|+|1+\alpha|$. After substituting $n=m+1$, the above inequality converted into
	\begin{align}\label{thetavalue}
		\|\Theta_{m+1}-\Theta_{m}\|\leq \nabla	\|\Theta_{m}-\Theta_{m-1}\|\leq \nabla^2	\|\Theta_{m-1}-\Theta_{m-2}\|\leq \cdots \leq \nabla^m\|\Theta_{1}-\Theta_{0}\|.
	\end{align}
	Eq.(\ref{thetavalue}) assists in finding the bound of $ \|\Theta_{n}-\Theta_m\|$ having triangle inequality, as
	\begin{align}
		\|\Theta_{n}-\Theta_{m}\|\leq&	\|\Theta_{m+1}-\Theta_{m}\|+\|\Theta_{m+2}-\Theta_{m+1}\|+\cdots +\|\Theta_{n-1}-\Theta_{n-2}\|+\|\Theta_{n}-\Theta_{n-1}\|\nonumber\\
		\leq& \nabla^m\big(1+\nabla+\nabla^2+\cdots+\nabla^{n-m-1}\big)\|\Theta_{1}-\Theta_{0}\|=\nabla^m\bigg(\frac{1-\nabla^{n-m}}{1-\nabla}\bigg)\|f_1\|.
			\end{align}
			If $\nabla<1,$ then $(1-\nabla^{n-m})<1$ and $\|f_1\|<\infty$, thus we get 
			\begin{align}
				\|\Theta_{n}-\Theta_{m}\|\leq \frac{\nabla^m}{1-\nabla}\|f_1\|,
			\end{align}
	which converges to zero as $m\rightarrow \infty$. Therefore, a function $\Theta$ exists such that $\lim_{n\rightarrow \infty}\Theta_{n}=\Theta$. Hence, $f=\sum_{i=0}^{\infty}f_i=\lim_{n\rightarrow \infty}\Theta_{n}=\Theta$, which is the exact solution of (\ref{maineq}). For a fixed $m$ and taking $n\rightarrow \infty$, the error bound is derived as
				\begin{align*}
			 		\|f-\Theta_{m}\|\leq \frac{\nabla^m}{1-\nabla}\|f_1\|,
			 		\end{align*}
			 		where $f_1$ is given in (\ref{solutionterm1}).
 	\end{proof}
 	\begin{rem}
 The value of $\alpha$ is consider such that $\nabla<1$, for that
 \begin{align*}
 \xi|\alpha|+|1+\alpha|<1 \implies \xi< \frac{1-|1+\alpha|}{|\alpha|}, \, \alpha \neq 0.
 \end{align*}
 Hence, to ensure that $\nabla<1$ , $\alpha$ should be chosen from [-1, 0). 
 	\end{rem}
 The following theorem equips the convergence and error results for the CBE's approximated $n$th-order AHPM solutions. 
 \begin{thm}
 	Let us assume that all the assumptions of Theorem \ref{thm1} hold. Consider $	\Upsilon_{m}=\sum_{i=0}^{m}f_{i}$ is the $m$th-order AHPM truncated solution obtained using (\ref{solutionterm2}) for the CBE (\ref{maineq}). Then $\Upsilon_{m}$ converges to the exact solution $f(\varsigma,\epsilon)$ with the following error bound
  		\begin{align}\label{error2}
  		\|f-\Upsilon_m\|\leq \frac{\xi^m}{1-\xi}\|f_1\|,
  		\end{align}
  		where $\xi=2t\mathcal{K}_1(\mu+1)L_0<1$ and $\|f_1\|<\infty.$
 \end{thm}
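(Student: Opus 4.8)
The plan is to exploit the telescoping structure that is deliberately engineered into the accelerated polynomials $\hat{H}_n$: this will show that the AHPM partial sum $\Upsilon_m = S_m$ coincides exactly with the Picard iterate of the contractive operator $\mathcal{S}$ of (\ref{operator2}), after which the error bound drops out of the standard Banach fixed-point estimate, in direct parallel with (but cleaner than) the preceding HAM theorem.

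The crux is the algebraic identity $\sum_{i=0}^{n-1}\hat{H}_i = N(S_{n-1})$, which I would prove by induction from the defining recursion $\hat{H}_{n-1} = N(S_{n-1}) - \sum_{i=0}^{n-2}\hat{H}_i$ and the base case $\hat{H}_0 = N(f_0) = N(S_0)$: adding $\hat{H}_{n-1}$ to the tail $\sum_{i=0}^{n-2}\hat{H}_i$ cancels the subtracted sum and leaves exactly $N(S_{n-1})$. Since (\ref{solutionterm2}) gives $f_0 = f^{in}$ and $f_i = \hat{H}_{i-1}$ for $i \geq 1$, and since the operator of (\ref{operator2}) can be written as $\mathcal{S}f = f^{in}(\epsilon) + N[f]$ with the time integration $\mathcal{L}^{-1}$ absorbed into $N$, this identity yields
\begin{align*}
\Upsilon_n = S_n = f_0 + \sum_{i=0}^{n-1}\hat{H}_i = f^{in}(\epsilon) + N(S_{n-1}) = \mathcal{S}(\Upsilon_{n-1}).
\end{align*}
This recasting of AHPM as pure fixed-point iteration is the step I expect to be the heart of the argument; it is precisely what distinguishes AHPM from HAM and what replaces the HAM factor $\nabla = \xi|\alpha| + |1+\alpha|$ by the bare contraction constant $\xi$.

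With the Picard identity in hand, I would invoke the contractivity of $\mathcal{S}$ from Theorem \ref{thm1} to write $\|\Upsilon_{n+1} - \Upsilon_n\| = \|\mathcal{S}\Upsilon_n - \mathcal{S}\Upsilon_{n-1}\| \leq \xi\|\Upsilon_n - \Upsilon_{n-1}\|$ and iterate down to $\|\Upsilon_{n+1} - \Upsilon_n\| \leq \xi^n\|\Upsilon_1 - \Upsilon_0\| = \xi^n\|f_1\|$, using $\Upsilon_1 - \Upsilon_0 = f_1$. For any $n > m$ the triangle inequality and the geometric series then give
\begin{align*}
\|\Upsilon_n - \Upsilon_m\| \leq \sum_{k=m}^{n-1}\xi^k\|f_1\| = \xi^m\,\frac{1-\xi^{n-m}}{1-\xi}\,\|f_1\| \leq \frac{\xi^m}{1-\xi}\|f_1\|.
\end{align*}

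Finally, since $\xi < 1$ and $\|f_1\| < \infty$, the bound above shows $(\Upsilon_m)$ is Cauchy in $\mathbb{Y}_{r,s}(T)$ and hence converges to some limit $f$; passing to the limit in $\Upsilon_n = \mathcal{S}(\Upsilon_{n-1})$ and using continuity of the contraction $\mathcal{S}$ gives $f = \mathcal{S}f$, so $f$ solves the fixed-point equation (\ref{operator1}), equivalently the CBE (\ref{maineq}). Letting $n \to \infty$ for fixed $m$ in the displayed estimate then yields the asserted error bound $\|f - \Upsilon_m\| \leq \frac{\xi^m}{1-\xi}\|f_1\|$, completing the proof. The only routine verifications left are the base case and inductive book-keeping for the telescoping identity together with the contraction estimate already granted by Theorem \ref{thm1}.
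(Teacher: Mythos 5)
Your proposal follows essentially the same route as the paper's own proof: both collapse the telescoping sum $\sum_{i=0}^{m-1}\hat{H}_i = N(\Upsilon_{m-1})$ to identify $\Upsilon_m = \mathcal{S}\Upsilon_{m-1}$ as a Picard iterate, then apply the contraction estimate of Theorem \ref{thm1}, the geometric-series/triangle-inequality bound, and the limit $n\rightarrow\infty$ for fixed $m$. If anything, your write-up is slightly more careful than the paper's, since you make the telescoping induction explicit and justify that the limit is a fixed point via continuity of $\mathcal{S}$, where the paper simply asserts the limit is the exact solution.
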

 \begin{proof}
 From the Eqs.(\ref{Hvalue}) and (\ref{solutionterm2}), $m$th-order approximated solution is 
 \begin{align}\label{truncated}
 \Upsilon_{m}=\sum_{i=0}^{m}f_{i}=f^{in}+\hat{H}_{0}+\hat{H}_{1}+\cdots +\hat{H}_{m-1}
 \end{align}
 where 
 \begin{align*}
 \hat{H}_{0}=&N(f_0), \quad \hat{H}_{1}=N(f_0+f_1)-N(f_0),\\
  \hat{H}_{2}=&N(f_0+f_1+f_2)-N(f_0+f_1),\\
  \hat{H}_{m-1}=&N(f_0+f_1+\cdots+f_{m-1})-N(f_0+f_1+\cdots+f_{m-2}).
 \end{align*}
 Substitute all the values of $\hat{H}_{m}$ into (\ref{truncated}), the  new form of  $\Upsilon_{m}$ is obtained as
 \begin{align}
 \Upsilon_{m}=f^{in}+N(f_0+f_1+\cdots+f_{m-1})=f^{in}+N(\Upsilon_{m-1})=\mathcal{S}\Upsilon_{m-1}.
 \end{align}
 By Theorem \ref{thm1}, we have
 \begin{align}
 	\|\Upsilon_{m+1}-\Upsilon_{m}\|\leq \xi	\|\Upsilon_{m}-\Upsilon_{m-1}\|,
 	\end{align}
 	implies that
 	 \begin{align}
 	 	\|\Upsilon_{m+1}-\Upsilon_{m}\|\leq \xi	\|\Upsilon_{m}-\Upsilon_{m-1}\|\leq \xi^2\|\Upsilon_{m-1}-\Upsilon_{m-2}\|\leq \cdots \leq \xi^m\|\Upsilon_{1}-\Upsilon_{0}\|.
 	 	\end{align}
 	 	Next, the triangle inequality helps us to get the bound of $\|\Upsilon_{n}-\Upsilon_{m}\|$ for all $n,m \in \mathbb{N}$ with $n>m$, as
 	 	\begin{align}
 	 			\|\Upsilon_{n}-\Upsilon_{m}\|\leq&	\|\Upsilon_{m+1}-\Upsilon_{m}\|+\|\Upsilon_{m+2}-\Upsilon_{m+1}\|+\cdots +\|\Upsilon_{n-1}-\Upsilon_{n-2}\|+\|\Upsilon_{n}-\Upsilon_{n-1}\|\nonumber\\
 	 			\leq& \xi^m\big(1+\xi+\xi^2+\cdots+\xi^{n-m-1}\big)\|\Upsilon_{1}-\Upsilon_{0}\|=\xi^m\bigg(\frac{1-\xi^{n-m}}{1-\nabla}\bigg)\|f_1\|.
 	 				\end{align}
 	 				Again, the term $\xi<1$ provides the following
 	 				\begin{align}
 	 					\|\Upsilon_{n}-\Upsilon_{m}\|\leq \frac{\xi^m}{1-\xi}\|f_1\|,
 	 				\end{align}
 	 		that converges to zero as $m\rightarrow \infty$. Therefore, a function $\Upsilon$ exists such that $\Upsilon_{n} \rightarrow \Upsilon$, as $n\rightarrow \infty$ and so $f=\sum_{i=0}^{\infty}f_i=\lim_{n\rightarrow \infty}\Upsilon_{n}=\Upsilon$, which is the exact solution of (\ref{maineq}). Further,  for a fixed $m$ and letting $n\rightarrow \infty$, the error bound (\ref{error2}) is accomplished.			
 \end{proof}
 \section{Numerical Results and Discussion}\label{numericalresult}
 This section analyses the accuracy, verification, and visual representation of the numerical  and semi-analytical methods and their results. 
It contains the findings of CBE (\ref{maineq}) by implementing the FVM, HAM, and AHPM for various collisional kernels. The discussion regarding the concentration function and the integral property of concentration (moments) is supported by the approximate solutions provided by three different schemes for an appropriate time scale. Graphs display all the results simulated by MATLAB and MATHEMATICA software.
\begin{exmp}\label{firstexample}
Assuming Eq.(\ref{maineq}) with  $K(\epsilon,\rho)=\epsilon\rho$, $b(\epsilon,\rho,\sigma)=\frac{2}{\rho}$ and $f^{in}(\epsilon)=\exp(-\epsilon)$ for which the  corresponding exact solution is  $f(\varsigma,\epsilon)=(1+\varsigma)^{2}\exp(-\epsilon(1+\varsigma))$  described in \cite{kostoglou2000study}. 
\end{exmp}
Eq.(\ref{solutionterm1}) provides the series solution terms, thanks to HAM as
 \begin{align*}
	 f_{0}(\varsigma,\epsilon,\alpha)=& e^{-\epsilon},\quad
	      f_{1}(\varsigma,\epsilon,\alpha)=\alpha  \varsigma \left(e^{-\epsilon} \epsilon-2 e^{-\epsilon}\right),\\
	       f_{2}(\varsigma,\epsilon,\alpha)=&\frac{1}{2} \alpha  \varsigma e^{-\epsilon} \left(2 \alpha  (\varsigma-2)+\alpha  \varsigma \epsilon^2+\epsilon (\alpha  (2-4\varsigma)+2)-4\right),\\
	    f_{3}(\varsigma,\epsilon,\alpha)=&\frac{1}{6} \alpha  \varsigma e^{-\epsilon} \bigl(\alpha ^2 \varsigma^2 \epsilon^3+6 \epsilon \bigl(\alpha ^2 \bigl(\varsigma^2-4 \varsigma+1\bigl)+\alpha  (2-4 \varsigma)+1\bigl)+12 (\alpha +1) (\alpha  (\varsigma-1)-1)\\
	    &+6 \alpha  \varsigma \epsilon^2 (\alpha +\alpha  (-\varsigma)+1)\bigl).
	 \end{align*}
	 While, the computation of series terms of approximated AHPM solution is derived by Eqs.(\ref{AHPMseriesterm1}-\ref{AHPMseriesterm3}), we acquire
	  \begin{align*}
	 	 f_{0}(\varsigma,\epsilon)=& e^{-\epsilon},\quad
	 	      f_{1}(\varsigma,\epsilon)=\varsigma \left(2 e^{-\epsilon}-e^{-\epsilon} \epsilon\right), \quad
	 	       f_{2}(\varsigma,\epsilon)=\frac{1}{2} \varsigma^2 e^{-\epsilon} ((\epsilon-4) \epsilon+2),\\
	 	    f_{3}(\varsigma,\epsilon)=&-\frac{1}{6} \varsigma^3 e^{-\epsilon} \epsilon^3+\varsigma^3 e^{-\epsilon} \epsilon^2-\varsigma^3 e^{-\epsilon} \epsilon, \quad
	 	    f_{4}(\varsigma,\epsilon)=\frac{1}{24} \varsigma^4 e^{-\epsilon} \epsilon^4-\frac{1}{3} \varsigma^4 e^{-\epsilon} \epsilon^3+\frac{1}{2} \varsigma^4 e^{-\epsilon} \epsilon^2,\\
	 	   f_{5}(\varsigma,\epsilon) =&-\frac{1}{120} \varsigma^5 e^{-\epsilon} \epsilon^3 ((\epsilon-10) \epsilon+20).
	 	 \end{align*}
 Continuing in this manner, one can  compute the higher order terms of the series solution using MATHEMATICA. The $5$th-order series approximated solutions, i.e., $\Theta_{5}$ of HAM  and $\Upsilon_5$ of AHPM, are considered for evaluation and the results are compared with finite volume and exact solutions. The corresponding optimal value of $\alpha=-0.826$, up to 3 significant digit (SD) with rounding, is generated by applying the formula (\ref{alphavalue}) for the truncated solution of HAM. \\

 The illustrations of the concentration functions obtained via three methods are visualized along  with the analytical solution for various volumes in Fig.{\ref{fig1}}. As anticipated, the concentration function exhibits decreasing behaviour as size increases. At a particular time $\varsigma=1$, small-sized particles are abundant and large-sized particles are scarce. We observe that FVM results demonstrate excellent chemistry with the analytical solution for particles of any sizes, whereas HAM ($\Theta_5$) and AHPM ($\Upsilon_5$) show acceptable precision for particles of small sizes but fail to maintain the accuracy for large sizes particles. Error plots in Fig.{\ref{fig7}} are graphical representations that assess the accuracy and quality of predicated FVM, HAM and AHPM solutions. The maximum error bounds of HAM, AHPM and FVM emerge for small size particles at considerable time and  are almost identical to zero. This indicates the novelty of our proposed schemes. Now, we expressed the concept of the experimental order of convergence (EOC) for the approximated methods to quantify how quickly the error decreases. The formula to find the EOC for FVM, HAM and AHPM is written as follows
\begin{align}\label{exacteoc}
\text{EOC}=\frac{1}{\ln( 2)}\ln \left(\frac{E_{\mathrm{I}}}{E_{2\mathrm{I}}}\right),
\end{align}
where $E_\mathrm{I}$ is the discrete error norm for $\mathrm{I}$ number of mesh points, i.e., $E_\mathrm{I}=\|N_{E}-N_{\mathrm{I}}\|$, where $N_{E}=\sum_{i=1}^{\mathrm{I}}f_{i}\Delta \epsilon_{i}$ and $N_{I}=\sum_{i=1}^{\mathrm{I}}\hat{f}_{i}\Delta \epsilon_{i}$ are the total number of particles generated analytically and numerically via FVM (\ref{semidiscrete}), respectively. Similarly, for HAM $N_{I}=\sum_{i=1}^{\mathrm{I}}\Theta_{n}^{i}\Delta \epsilon_{i}$ and for AHPM $N_{I}=\sum_{i=1}^{\mathrm{I}}\Upsilon_{n}^{i}\Delta \epsilon_{i}$, where $\Theta_{n}^{i}=\Theta_n(\varsigma,\epsilon_i)$ and $\Upsilon_{n}^{i}=\Upsilon_n(\varsigma,\epsilon_i).$ Table \ref{tab:table} reveals that the EOC of all the techniques is one which implies that they are first order accurate. 
	\begin{table}[!htb]
	           	  \centering
	           	    \begin{tabular}{ |p{1.4cm}|p{1cm}|p{1cm}|p{1cm}|}
	           	  
	           	   	            \hline
	           	   	            Cells (I)      & FVM & HAM & AHPM\\
	           	   	                 \hline
	           	   	              30& -  &- &-\\
	           	   	             \hline
	           	   	              60   &  1.1515   &0.9808 &0.9981 \\
	           	   	                 \hline
	           	   	               120  & 1.0901 &0.9906 & 0.9990 \\
	           	   	               \hline
	           	   	               240  & 1.0497 &0.9953 & 0.9994  \\
	           	   	            \hline
	           	           \end{tabular}
	           	            \caption{EOC using FVM, HAM and AHPM at time $\varsigma=1$ for Example {\ref{firstexample}}}
	           	            \label{tab:table}
	           	           \end{table}

Furhter, to justify the approximated solutions, integral properties like moments are also computed by Eq.(\ref{moment}) and compared with the analytical moments. Fig.\ref{fig2} provides information regarding the comparison of HAM, AHPM, FVM moments with the exact ones at time $\varsigma=1.$ The zeroth moment, in Fig.\ref{fig2}(a) can be visualized with an increasing pattern as time increases. All the results of approximated moment overlap to each other and demonstrate excellent accuracy with the exact one. As expected, in Fig.{\ref{fig2}(b)}, the approximated first moments obtained by HAM, AHPM and FVM are constant and  identical to the analytic moment, that is 1. It shows the conservation of mass during the particulate process in the system for a set of kernels considered in Example \ref{firstexample}. The approximated second moments using $\Theta_5$, $\Upsilon_5$ and FVM have decreased behaviour along with the exact moment as time progresses, see Fig.{\ref{fig2}(c)}. HAM and FVM moments have affirmed satisfactory agreement to the precised one, whereas AHPM results deviate after time $\varsigma=0.5.$ 

\begin{figure}[htb!]
 \centering  \includegraphics[width=0.45\textwidth,height=0.35\textwidth]{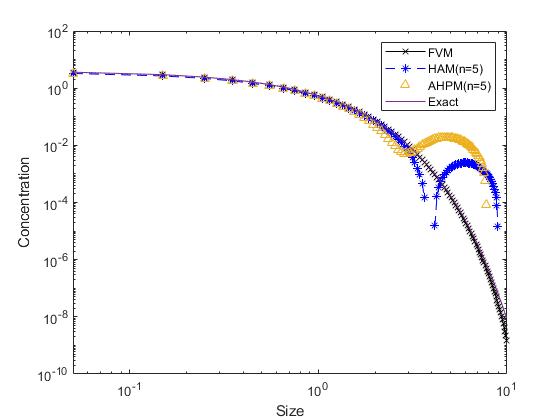}
   \caption{Log-log plots of concentration functions at time 1}
   \label{fig1}
 \end{figure}
\begin{figure}[htb!]
     \centering
     \begin{subfigure}{0.45\textwidth}
         \centering
         \includegraphics[width=\textwidth]{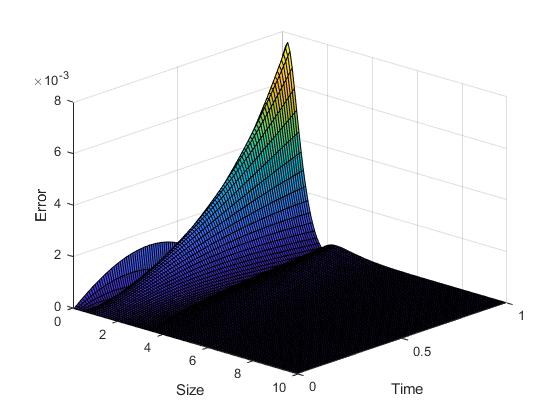}
         \caption{FVM error}
         \label{fig: a1}
     \end{subfigure}
     \hfill
     \begin{subfigure}{0.45\textwidth}
         \centering
         \includegraphics[width=\textwidth]{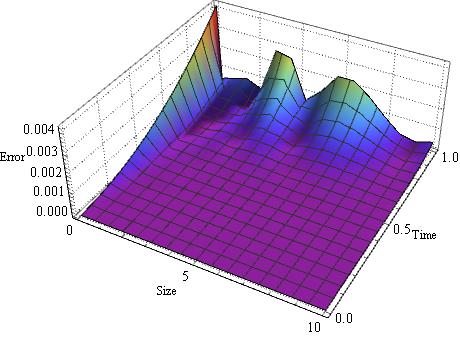}
         \caption{HAM error}
         \label{fig:b11}
     \end{subfigure}
     \hfill
     \begin{subfigure}{0.45\textwidth}
         \centering
         \includegraphics[width=\textwidth]{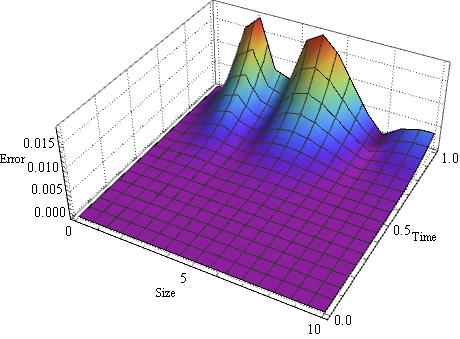}
         \caption{AHPM error}
         \label{fig:c1}
     \end{subfigure}
        \caption{Absolute error plots}
        \label{fig7}
\end{figure}
\begin{figure}[htb!]
     \centering
     \begin{subfigure}{0.45\textwidth}
         \centering
         \includegraphics[width=\textwidth]{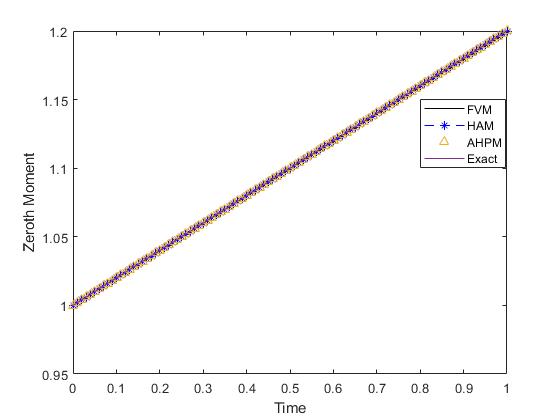}
         \caption{Zeroth moment}
         \label{fig: a}
     \end{subfigure}
     \hfill
     \begin{subfigure}{0.45\textwidth}
         \centering
         \includegraphics[width=\textwidth]{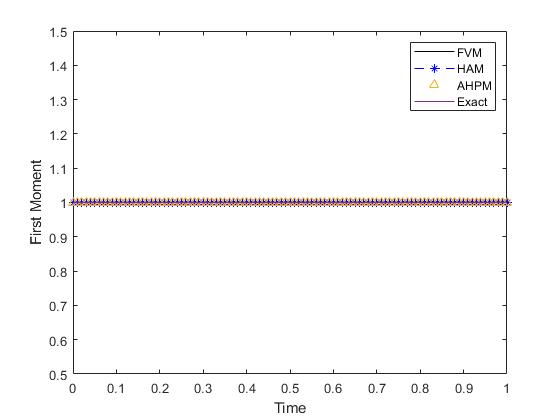}
         \caption{First moment}
         \label{fig:b}
     \end{subfigure}
     \hfill
     \begin{subfigure}{0.45\textwidth}
         \centering
         \includegraphics[width=\textwidth]{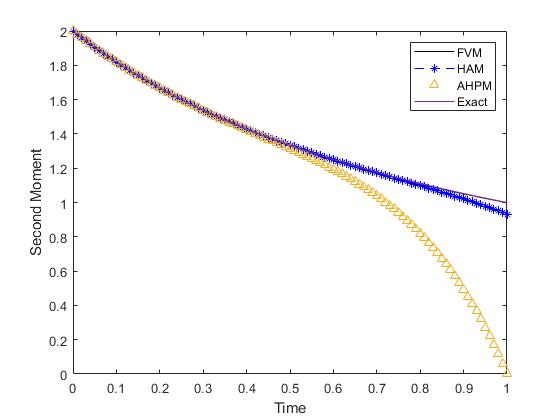}
         \caption{Second moment}
         \label{fig:c}
     \end{subfigure}
        \caption{Moments comparison at time 1: FVM, HAM, AHPM and Exact}
        \label{fig2}
\end{figure}
 \begin{exmp}\label{example2}
	 	 Let us consider  Eq.(\ref{maineq}) with kernels $K(\epsilon,\rho)=\frac{\epsilon\rho}{20}$, $b(\epsilon,\rho,\sigma)=\frac{2}{\rho}$ and initial data  $f^{in}(\epsilon)={\epsilon}\exp(-\epsilon).$ The analytical solution for the concentration is hard to compute, however the precise formulations for the zeroth and  first moments are $M_0(\varsigma)=1+\frac{\varsigma}{5}$ and $M_1(\varsigma)=2,$ respectively. These moments can be easily generated by multiplying Eq.(\ref{maineq}) by 1 and $\epsilon$ as well as integrating from 0 to $\infty$ over $\epsilon$. Substituting these kernel parameters and initial condition in Eq.(\ref{solutionterm1}), we get the series terms due to HAM as
	 	 \end{exmp}	
 \begin{align*}
	 f_{0}(\varsigma,\epsilon,\alpha)=& \epsilon e^{-\epsilon},\quad
	      f_{1}(\varsigma,\epsilon,\alpha)=\frac{1}{10} \alpha  \varsigma e^{-\epsilon} ((\epsilon-2) \epsilon-2), \\
	       f_{2}(\varsigma,\epsilon,\alpha)=&\frac{1}{200} \alpha  \varsigma e^{-\epsilon} (\alpha  (\varsigma \epsilon ((\epsilon-4) \epsilon-2)+4 \varsigma+20 (\epsilon-2) \epsilon-40)+20 ((\epsilon-2) \epsilon-2)),\\
	    f_{3}(\varsigma,\epsilon,\alpha)=&\frac{\alpha ^2 \varsigma^2 e^{-\epsilon} \left(\alpha  \varsigma \epsilon \left((\epsilon-6) \epsilon^2+12\right)+30 (\alpha +1) (\epsilon ((\epsilon-4) \epsilon-2)+4)\right)}{6000}\\
	    &+(\alpha +1) \big(\frac{1}{2} \alpha  \varsigma^2 \big(\frac{1}{100} \alpha  e^{-\epsilon} \epsilon \big(\epsilon^2-2 \epsilon-2\big)-\frac{1}{50} \alpha  e^{-\epsilon} \big(\epsilon^2-2\big)\big)+\alpha  (\alpha +1) \varsigma \big(\frac{1}{10} e^{-\epsilon} \epsilon^2-\frac{1}{5} e^{-\epsilon} (\epsilon+1)\big)\big).
	 \end{align*}
For $ \alpha =-0.969$ taken up to 3 SD with rounding, we have considered $5$th-order HAM $(\Theta_5)$ truncated series solution for numerical verification. The higher terms could not be obtained due to the complexity of the model and parameters. \\
	
	Further, AHPM (\ref{AHPMseriesterm1}-\ref{AHPMseriesterm3}) is also applied for this case to compute the first few series terms for the approximated solution. The terms are listed below
	 \begin{align*}
	 	 f_{0}(\varsigma,\epsilon)=& \epsilon e^{-\epsilon},\quad
	 	      f_{1}(\varsigma,\epsilon)=\frac{1}{10} \varsigma e^{-\epsilon} (2-(\epsilon-2) \epsilon), \quad
	 	       f_{2}(\varsigma,\epsilon)=\frac{1}{200} \varsigma^2 e^{-\epsilon} (\epsilon ((\epsilon-4) \epsilon-2)+4),\\
	 	    f_{3}(\varsigma,\epsilon)=&-\frac{\varsigma^3 e^{-\epsilon} \epsilon^4}{6000}+\frac{\varsigma^3 e^{-\epsilon} \epsilon^3}{1000}-\frac{1}{500} \varsigma^3 e^{-\epsilon} \epsilon, \quad
	 	    f_{4}(\varsigma,\epsilon)=\frac{\varsigma^4 e^{-\epsilon} \epsilon^5}{240000}-\frac{\varsigma^4 e^{-\epsilon} \epsilon^4}{30000}+\frac{\varsigma^4 e^{-\epsilon} \epsilon^3}{60000}+\frac{\varsigma^4 e^{-\epsilon} \epsilon^2}{10000}.
	 	 \end{align*}
In the absence of an exact solution of concentration function, the difference between consecutive terms of series solutions is plotted in Fig.\ref{fig3}(b) to see the precision of semi-analytical algorithms. Graph shows that the difference between 4 and 5 terms solutions is negligible for each schemes and hence, it is certain that the approximated solutions converge to the exact one. Further, to validate the results, $5$th-order truncated solutions via HAM ($\Theta_5)$ and AHPM ($\Upsilon_5$) are also compared with the FVM solution in Fig.\ref{fig3}(a) at time $\varsigma=1.$ The figure elaborates that HAM and AHPM results have little disturbance to FVM solution for small size particles, however, all results exhibit similar behaviour as particle size increases. The approximated concentration plots display an increasing pattern till the size $\epsilon=1$, which decreases further as particle size progresses.\\

Since, expressions for analytical moments are available, to justify the approximated solutions, we resemble the approximated moments of HAM using $\Theta_5$, AHPM via $\Upsilon_5$ and FVM to the corresponding analytical moments in Fig.\ref{fig4} for different time distributions. In Figs.{\ref{fig4}(a), \ref{fig4}(b)}, zeroth and first moments of HAM, AHPM and FVM show remarkable accuracy with the exact ones for various time scales. Therefore, we can consider our approximated solutions to predict better results for different particle sizes and times. In the unavailability of the exact second moment, series (HAM, AHPM) and FVM moments are compared at a time scale of 0 to 1 in Fig.{\ref{fig4}(c)}. The figure indicates that all approximated moments follow same pattern where HAM and AHPM moments exactly coincide with each other. 
\begin{figure}[htb!]
     \centering
     \begin{subfigure}{0.45\textwidth}
         \centering
         \includegraphics[width=\textwidth]{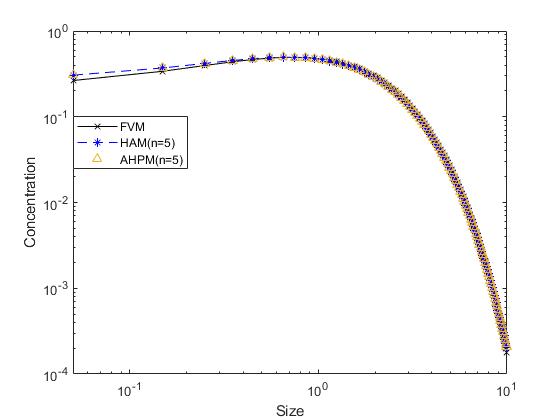}
         \caption{Log-log plots of concentration functions at time 1}
         \label{fig: a11}
     \end{subfigure}
     \hfill
     \begin{subfigure}{0.45\textwidth}
         \centering
         \includegraphics[width=\textwidth]{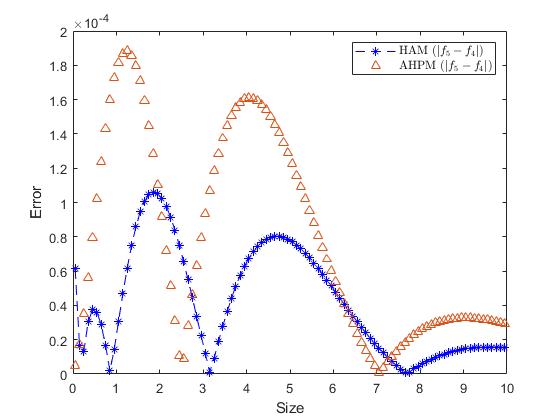}
         \caption{Terms error}
         \label{fig:b1}
     \end{subfigure}
        \caption{Concentration and consecutive terms error plots}
        \label{fig3}
\end{figure}
 	\begin{exmp}\label{exmaple3}
Consider Eq.(\ref{maineq}) with $K(\epsilon,\rho)=1$, $b(\epsilon,\rho,\sigma)=\delta(\epsilon-0.4\rho)+\delta(\epsilon-0.6\rho)$ and $f^{in}(\epsilon)=\exp(-\epsilon).$ In this case, the exact solution for the concentration does not exist in the literature. However, exact moments (zeroth, first and second) can be calculated analytically as $M_0(\varsigma)=\frac{1}{1-\varsigma},\quad  M_1(\varsigma)= 1$ and $M_2(\varsigma)=2(1-\varsigma)^{0.48}.$ The second moment is computed by multiplying Eq.(\ref{maineq}) by ${\epsilon}^2$ and integrating  from 0 to $\infty$ over $\epsilon$.
 	 	  	\end{exmp}
 	 	  	
Implement the HAM, AHPM and FVM to determine a solely approximative solutions. The HAM (\ref{solutionterm1}) is applied for these set of kernels to get the following series terms
\begin{align*}
 	 	  		 	  	 f_{0}(\varsigma,\epsilon,\alpha)=& e^{-\epsilon},\quad
 	 	  		 	  	      f_{1}(\varsigma,\epsilon,\alpha)=\alpha  \varsigma \left(-1.67 e^{-1.67 \epsilon} \theta (0.67 \epsilon)-2.5 e^{-2.5 \epsilon} \theta (1.5 \epsilon)+e^{-\epsilon}\right), \\
 	 	  		 	  	       f_{2}(\varsigma,\epsilon,\alpha)=&\alpha  \varsigma e^{-18.36 \epsilon} \bigl(\theta (0.67 \epsilon) \bigl(1.39 \alpha  \varsigma e^{15.58 \epsilon} \theta (1.11 \epsilon)+2.08 \alpha  \varsigma e^{14.19 \epsilon} \theta (2.5 \epsilon)
 	 	  		 	  	       +e^{16.69\epsilon} (-1.67 \alpha -0.83 \alpha  \varsigma-1.67)\bigl)\\
 	 	  		 	  	      +\theta (1.5 \epsilon)& \bigl(\alpha  \varsigma \bigl(2.08 e^{14.19 \epsilon} \theta (1.67 \epsilon)
 	 	  		 	  	       +3.13 e^{12.11 \epsilon} \theta (3.75 \epsilon)\bigl)+e^{15.86 \epsilon} (-2.5 \alpha -1.25 \alpha  \varsigma-2.5)\bigl)
 	 	  		 	  	       +( \alpha +1) e^{17.36 \epsilon}\bigl),
 	 	  		 	  	 \end{align*}
 	 	  		 	  	 where $\delta$ and $\theta$ are Dirac's delta and Heaviside step functions, respectively.
\begin{figure}[htb!]
     \centering
     \begin{subfigure}{0.45\textwidth}
         \centering
         \includegraphics[width=\textwidth]{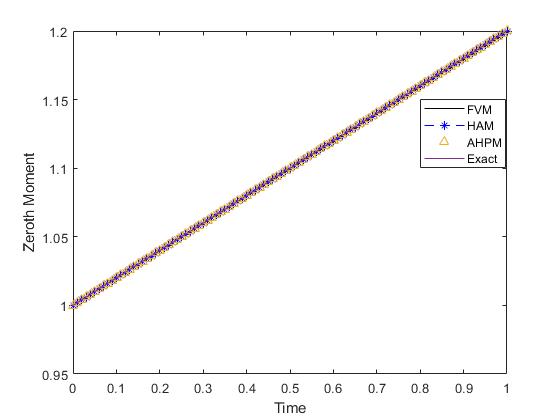}
         \caption{Zeroth moment}
         \label{fig:aa}
     \end{subfigure}
     \hfill
     \begin{subfigure}{0.45\textwidth}
         \centering
         \includegraphics[width=\textwidth]{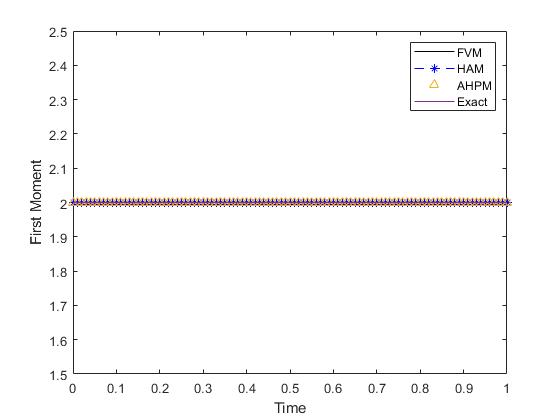}
         \caption{First moment}
         \label{fig:bb}
     \end{subfigure}
     \hfill
     \begin{subfigure}{0.45\textwidth}
         \centering
         \includegraphics[width=\textwidth]{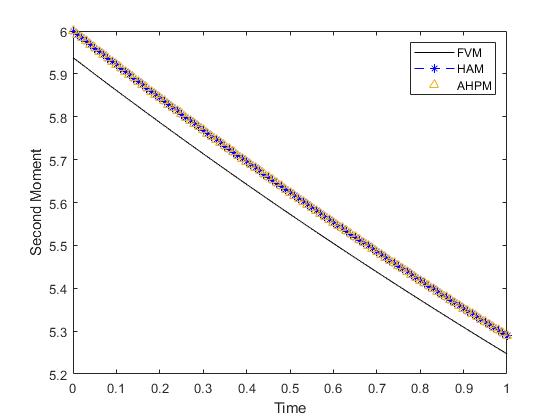}
         \caption{Second moment}
         \label{fig:cc}
     \end{subfigure}
        \caption{Moments comparison at time 1: FVM, HAM, AHPM and Exact}
        \label{fig4}
\end{figure}\\

	 	 We continue this procedure to compute the next term to estimate the $3rd$-order approximate solution $(\Theta_3)$ by MATHEMATICA. The optimal value of $\alpha$ is taken as  -0.829, again using only 3 SD with rounding, by operating Eq.(\ref{alphavalue}). Moving further, the second semi-analytical method AHPM executes the first few terms of the series solution as
	 \begin{align*}
	  f_{0}(\varsigma,\epsilon)=&  e^{-\epsilon},\quad
	  f_{1}(\varsigma,\epsilon)=\varsigma \left(1.67 e^{-1.67 \epsilon} \theta (0.67 \epsilon)+2.5 e^{-2.5 \epsilon} \theta (1.5 \epsilon)-e^{-\epsilon}\right),\\
	  f_{2}(\varsigma,\epsilon)=&\varsigma e^{-22.53 \epsilon} \bigl(\theta (1.5 \epsilon) \bigl(\varsigma e^{18.36 \epsilon} ((2.08\, -1.39 \varsigma) \theta (1.67 \epsilon)+2.78 \varsigma \theta (x))+\varsigma e^{16.28 \epsilon} ((3.12\, -2.08 \varsigma) \theta (3.75 \epsilon)\\
	  &+4.17 \varsigma \theta (\epsilon))+e^{20.03 \epsilon} \bigl(2.5\, -0.83 \varsigma^2\bigl)+((-0.83 \varsigma-1.25) \varsigma-2.5) e^{20.03 \epsilon}\bigl)\\
	  &+\varsigma \theta (0.67 \epsilon) \bigl(e^{19.75 \epsilon} ((1.39\, -0.93 \varsigma) \theta (1.11 \epsilon)+1.85 \varsigma \theta (\epsilon))+e^{18.36 \epsilon} ((2.08\, -1.39 \varsigma) \theta (2.5 \epsilon)+2.78 \varsigma \theta (\epsilon))\\
	  &+(-1.11 \varsigma-0.83) e^{20.86 \epsilon}\bigl)+0.33 \varsigma^2 e^{21.53 \epsilon}\bigl).
	 	 \end{align*}
An approximated truncated solution of order 3 $(\Upsilon_3)$ is considered to see the observation of concentration function and moments for a range of particle sizes.  Due to the lack of an exact concentration function, only approximate solutions of HAM $(\Theta_3)$, AHPM $(\Upsilon_3)$ and FVM are presented graphically in Fig.{\ref{fig5}} for time $\varsigma=0.5$ over a range of particle sizes. Interestingly, FVM and HAM results are almost identical up to size 10, but AHPM does not show the same behaviour as HAM/FVM for large size particles. Similar to the previous case, it is noticed that the difference between consecutive terms of the series solutions decreases and tends to zero, ensuring the convergence of the algorithms. Thus, plot is omitted here. The approximated moments computed via FVM and series solution methods are compared with the analytical moments in Fig.{\ref{fig6}} to justify the novelty of schemes. In Fig.{\ref{fig6}(a)}, zeroth moments using AHPM 3-term solution ($\Upsilon_3$) and FVM are exactly matching with the exact number of particles. However, HAM ($\Theta_3$) moment starts slipping away from the accuracy after time 0.25. In addition, Fig.{\ref{fig6}(c)} also justifies a slight disturbance in the second moment of HAM. As expected, the total mass is conserved and all the approximated methods provide the excellent accuracy with it, see Fig.{\ref{fig6}(b)}. In conclusion, we can visualize from these figures that AHPM and FVM results are nearest to the exact ones as compared to the HAM.
\begin{figure}[htb!]
 \centering
   \includegraphics[width=0.45\textwidth,height=0.35\textwidth]{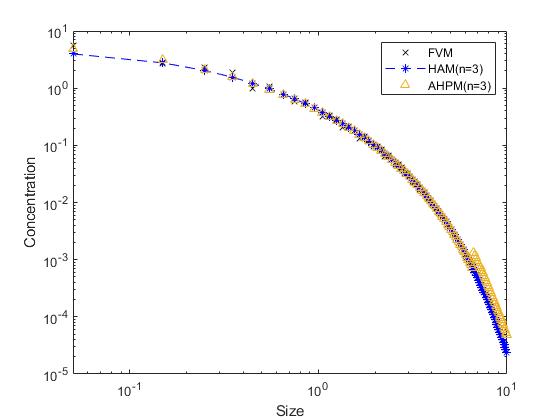}
   \caption{Log-log plots of concentration functions at time 0.5}
   \label{fig5}
 \end{figure}
 	\begin{figure}[htb!]
 	     \centering
 	     \begin{subfigure}{0.45\textwidth}
 	         \centering
 	         \includegraphics[width=\textwidth]{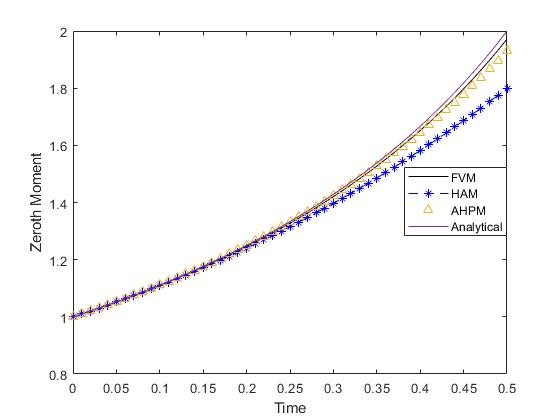}
 	         \caption{Zeroth moment}
 	         \label{fig:aaa}
 	     \end{subfigure}
 	     \hfill
 	     \begin{subfigure}{0.45\textwidth}
 	         \centering
 	         \includegraphics[width=\textwidth]{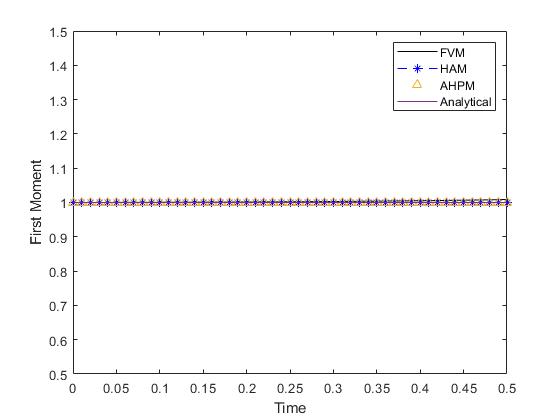}
 	         \caption{First moment}
 	         \label{fig:bbb}
 	     \end{subfigure}
 	     \begin{subfigure}{0.45\textwidth}
 	         \centering
 	         \includegraphics[width=\textwidth]{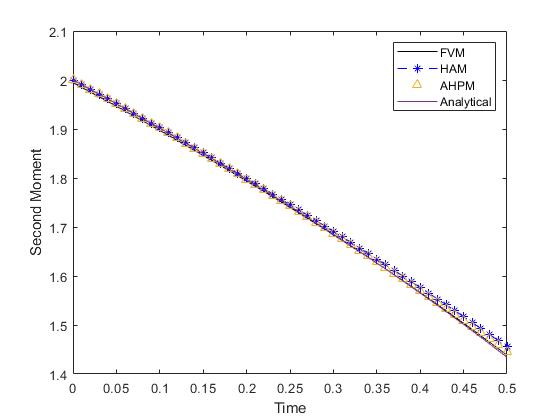}
 	         \caption{Second moment}
 	         \label{fig:ccc}
 	     \end{subfigure}
 	        \caption{Moments comparison at time 0.5: FVM, HAM, AHPM and Exact}
 	        \label{fig6}
 	\end{figure}

\section{Conclusions}\label{conclusions} The HAM, AHPM and FVM
were employed successfully for solving the non-linear collision-induced breakage equation with
certain collision and breakage kernels. The approximate analytical solutions were obtained by truncating the infinite series form of the series solution which was proven to be the exact solution. These methods were easy to implement on such non-linear integro-partial differential equations for various kernels and exponential decay initial functions. Convergence analysis was exhibited for the series solutions of HAM and AHPM and it was reliable enough to achieve the error estimations in each case. The approximated results of concentration function and moments by HAM, AHPM and FVM were compared to the exact ones for three test problems. All the graphs described the good precision and efficiency of considered techniques.\\

{\bf{CONFLICT OF INTEREST:}} This work does not have any conflicts of interest.\\

{\bf{ACKNOWLEDGEMENTS:}} SKB work is supported by CSIR India, which provides the PhD fellowship and the file No. is 1157/CSIR-UGC NET June 2019. The author, SH thankfully acknowledge the financial assistance provided by CSIR, file No. 09/719(0125)/2021 EMR-I, India.

	\bibliography{reference}

\begin{thebibliography}{10}

\bibitem{fang2017population}
Q.~Fang, D.~Jing, H.~Zhou, and S.~Li, ``Population balance of droplets in a
  pulsed disc and doughnut column with wettable internals,'' {\em Chemical
  Engineering Science}, vol.~161, pp.~274--287, 2017.

\bibitem{wheeler2019effects}
L.~F. Wheeler and D.~L. Mathias, ``Effects of asteroid property distributions
  on expected impact rates,'' {\em Icarus}, vol.~321, pp.~767--777, 2019.

\bibitem{capece2018population}
M.~Capece, ``Population balance modeling applied to the milling of
  pharmaceutical extrudate for use in scale-up,'' {\em Advanced Powder
  Technology}, vol.~29, no.~12, pp.~3022--3032, 2018.

\bibitem{ziff1991new}
R.~M. Ziff, ``New solutions to the fragmentation equation,'' {\em Journal of
  Physics A: Mathematical and General}, vol.~24, no.~12, p.~2821, 1991.

\bibitem{ziff1992explicit}
R.~M. Ziff, ``An explicit solution to a discrete fragmentation model,'' {\em
  Journal of Physics A: Mathematical and General}, vol.~25, no.~9, p.~2569,
  1992.

\bibitem{matsuda2004modeling}
S.~Matsuda, H.~Hatano, T.~Muramoto, and A.~Tsutsumi, ``Modeling for size
  reduction of agglomerates in nanoparticle fluidization,'' {\em AIChE
  Journal}, vol.~50, no.~11, pp.~2763--2771, 2004.

\bibitem{ferrante2005time}
F.~Ferrante and V.~T. Liveri, ``Time evolution of size and polydispersity of an
  ensemble of nanoparticles growing in the confined space of aot reversed
  micelles by computer simulations,'' {\em Colloids and Surfaces A:
  Physicochemical and Engineering Aspects}, vol.~259, no.~1-3, pp.~7--13, 2005.

\bibitem{peng2020solid}
G.~Peng, X.~Huang, L.~Zhou, G.~Zhou, and H.~Zhou, ``Solid-liquid two-phase flow
  and wear analysis in a large-scale centrifugal slurry pump,'' {\em
  Engineering Failure Analysis}, vol.~114, p.~104602, 2020.

\bibitem{cheng1988scaling}
Z.~Cheng and S.~Redner, ``Scaling theory of fragmentation,'' {\em Physical
  Review Letters}, vol.~60, no.~24, p.~2450, 1988.

\bibitem{ernst2007nonlinear}
M.~H. Ernst and I.~Pagonabarraga, ``The nonlinear fragmentation equation,''
  {\em Journal of Physics A: Mathematical and Theoretical}, vol.~40, no.~17,
  p.~F331, 2007.

\bibitem{cheng1990kinetics}
Z.~Cheng and S.~Redner, ``Kinetics of fragmentation,'' {\em Journal of Physics
  A: Mathematical and General}, vol.~23, no.~7, p.~1233, 1990.

\bibitem{kostoglou2000study}
M.~Kostoglou and A.~Karabelas, ``A study of the nonlinear breakage equation:
  analytical and asymptotic solutions,'' {\em Journal of Physics A:
  Mathematical and General}, vol.~33, no.~6, p.~1221, 2000.

\bibitem{krapivsky2003shattering}
P.~Krapivsky and E.~Ben-Naim, ``Shattering transitions in collision-induced
  fragmentation,'' {\em Physical Review E}, vol.~68, no.~2, p.~021102, 2003.

\bibitem{barik2020global}
P.~K. Barik and A.~K. Giri, ``Global classical solutions to the continuous
  coagulation equation with collisional breakage,'' {\em Zeitschrift f{\"u}r
  angewandte Mathematik und Physik}, vol.~71, pp.~1--23, 2020.

\bibitem{giri2021existence}
A.~K. Giri and P.~Lauren{\c{c}}ot, ``Existence and nonexistence for the
  collision-induced breakage equation,'' {\em SIAM Journal on Mathematical
  Analysis}, vol.~53, no.~4, pp.~4605--4636, 2021.

\bibitem{das2020approximate}
A.~Das, J.~Kumar, M.~Dosta, and S.~Heinrich, ``On the approximate solution and
  modeling of the kernel of nonlinear breakage population balance equation,''
  {\em SIAM Journal on Scientific Computing}, vol.~42, no.~6, pp.~B1570--B1598,
  2020.

\bibitem{paul2023moments}
J.~Paul, A.~Das, and J.~Kumar, ``Moments preserving finite volume
  approximations for the non-linear collisional fragmentation model,'' {\em
  Applied Mathematics and Computation}, vol.~436, p.~127494, 2023.

\bibitem{lombart2022fragmentation}
M.~Lombart, M.~Hutchison, and Y.-N. Lee, ``Fragmentation with discontinuous
  galerkin schemes: non-linear fragmentation,'' {\em Monthly Notices of the
  Royal Astronomical Society}, vol.~517, no.~2, pp.~2012--2027, 2022.

\bibitem{filbet2004numerical}
F.~Filbet and P.~Lauren{\c{c}}ot, ``Numerical simulation of the smoluchowski
  coagulation equation,'' {\em SIAM Journal on Scientific Computing}, vol.~25,
  no.~6, pp.~2004--2028, 2004.

\bibitem{bariwal2023convergence}
S.~K. Bariwal and R.~Kumar, ``Convergence and error estimation of weighted
  finite volume scheme for coagulation-fragmentation equation,'' {\em Numerical
  Methods for Partial Differential Equations}, vol.~39, no.~3, pp.~2561--2583,
  2023.

\bibitem{liao1995approximate}
S.~Liao, ``An approximate solution technique not depending on small parameters:
  a special example,'' {\em International Journal of Non-Linear Mechanics},
  vol.~30, no.~3, pp.~371--380, 1995.

\bibitem{liao2004homotopy}
S.~Liao, ``On the homotopy analysis method for nonlinear problems,'' {\em
  Applied Mathematics and Computation}, vol.~147, no.~2, pp.~499--513, 2004.

\bibitem{liu2010essence}
C.-s. Liu, ``The essence of the homotopy analysis method,'' {\em Applied
  Mathematics and Computation}, vol.~216, no.~4, pp.~1299--1303, 2010.

\bibitem{kalla2012accelerated}
I.~Kalla, ``An accelerated homotopy perturbation method for solving nonlinear
  equations,'' {\em Journal of Fractional Calculus and Applications}, vol.~3,
  pp.~1--8, 2012.

\bibitem{hendi2017accelerated}
F.~Hendi and M.~Al-Qarni, ``An accelerated homotopy perturbation method for
  solving nonlinear two-dimensional volterra-fredholm integrodifferential
  equations,'' {\em Advances in Mathematical Physics}, vol.~2017, 2017.

\bibitem{liao2003beyond}
S.~Liao, ``Beyond perturbation: Introduction to the homotopy analysis method
  (chapman hall/crc press, boca raton),'' 2003.

\bibitem{odibat2010study}
Z.~M. Odibat, ``A study on the convergence of homotopy analysis method,'' {\em
  Applied Mathematics and Computation}, vol.~217, no.~2, pp.~782--789, 2010.

\bibitem{el2012accelerated}
I.~EL-Kalla, ``An accelerated homotopy perturbation method for solving
  nonlinear equation,'' {\em J Fract Calc Appl}, vol.~3, pp.~1--6, 2012.

\bibitem{kaur2022approximate}
G.~Kaur, R.~Singh, and H.~Briesen, ``Approximate solutions of aggregation and
  breakage population balance equations,'' {\em Journal of Mathematical
  Analysis and Applications}, vol.~512, no.~2, p.~126166, 2022.

\bibitem{dutta2018population}
A.~Dutta, Z.~P{\i}nar, D.~Constales, and T.~{\"O}zi{\c{s}}, ``Population
  balances involving aggregation and breakage through homotopy approaches,''
  {\em International Journal of Chemical Reactor Engineering}, vol.~16, no.~6,
  2018.

\bibitem{kaur2019analytical}
G.~Kaur, R.~Singh, M.~Singh, J.~Kumar, and T.~Matsoukas, ``Analytical approach
  for solving population balances: a homotopy perturbation method,'' {\em
  Journal of Physics A: Mathematical and Theoretical}, vol.~52, no.~38,
  p.~385201, 2019.

\bibitem{eymard2000finite}
R.~Eymard, T.~Gallou{\"e}t, and R.~Herbin, ``Finite volume methods,'' {\em
  Handbook of Numerical Analysis}, vol.~7, pp.~713--1018, 2000.

\bibitem{singh2018analytical}
R.~Singh, ``Analytical approach for computation of exact and analytic
  approximate solutions to the system of lane-emden-fowler type equations
  arising in astrophysics,'' {\em The European Physical Journal Plus},
  vol.~133, no.~8, p.~320, 2018.

\bibitem{he2003homotopy}
J.-H. He, ``Homotopy perturbation method: a new nonlinear analytical
  technique,'' {\em Applied Mathematics and Computation}, vol.~135, no.~1,
  pp.~73--79, 2003.

\bibitem{adomian1994solving}
G.~Adomian, ``Solving frontier problems of physics: the decomposition method,
  springer,'' {\em Dordrecht}, 1994.

\bibitem{el2007error}
I.~L. El-Kalla, ``Error analysis of adomian series solution to a class of
  nonlinear differential equations,'' {\em Applied Mathematics E-Notes},
  vol.~7, pp.~214--221, 2007.

\bibitem{kharchandy2023note}
F.~W.~V. Kharchandy, A.~Das, V.~Thota, J.~Saha, and M.~Singh, ``A note on the
  volume conserving solution to simultaneous aggregation and collisional
  breakage equation,'' {\em Axioms}, vol.~12, no.~2, p.~181, 2023.

\bibitem{rach2008new}
R.~C. Rach, ``A new definition of the adomian polynomials,'' {\em Kybernetes},
  vol.~37, no.~7, pp.~910--955, 2008.

\end{thebibliography}
	\bibliographystyle{ieeetr}
	\end{document}